\documentclass{amsart}

\usepackage{enumerate}
\usepackage{amsmath, amssymb, amsthm,  float, amsfonts}
\usepackage{hyperref}
 \newtheorem{theorem}{Theorem}[section]
 \newtheorem{corollary}[theorem]{Corollary}
 \newtheorem{lemma}[theorem]{Lemma}
 \newtheorem{proposition}[theorem]{Proposition}
 \newtheorem*{question}{Question}

 \newtheorem*{conjecture}{Huppert's Conjecture}
 \theoremstyle{definition}
 \newtheorem*{notation}{Notation}
\newcommand{\C}{\mathbb{C}}
\newcommand{\Z}{\mathbb{Z}}
\newcommand{\Irr}{{\mathrm {Irr}}}
\newcommand{\cd}{{\mathrm {cd}}}
\newcommand{\Aut}{{\mathrm {Aut}}}

\begin{document}

\title[Simple Classical Groups]{ Simple classical groups of Lie type are determined by  their character degrees }

\author{Hung P. Tong-Viet}
\email{tvphihung@gmail.com}
\address{School of Mathematical Sciences,
University of KwaZulu-Natal\\
Pietermaritzburg 3209, South Africa}

\begin{abstract}
Let $G$ be a finite group. Denote by $\Irr(G)$ the set of all irreducible complex characters of
$G.$ Let $\cd(G)$ be the set of all irreducible complex character degrees of $G$ forgetting
multiplicities, that is, $\cd(G)=\{\chi(1):\chi\in \Irr(G)\}$ and let $\textrm{X}_1(G)$ be the set
of all irreducible complex character degrees of $G$ counting multiplicities. Let $H$ be a finite
nonabelian simple classical group. In this paper, we will show that if $G$ is a finite group and
$\textrm{X}_1(G)=\textrm{X}_1(H)$ then $G$ is isomorphic to $H.$ In particular, this implies that
the nonabelian simple classical groups of Lie type are uniquely determined by the structure of
their complex group algebras.
\end{abstract}

\keywords{character degrees; simple classical groups; complex group algebras} \subjclass[2000]{Primary
20C15; Secondary 20D05}
\maketitle

\section{Introduction}
All groups considered are finite and all characters are complex characters. Let $G$ be a group.
Denote by $\Irr(G)$ the set of all irreducible characters of $G.$ Let $\cd(G)$ be the set of all
irreducible character degrees of $G$ forgetting multiplicities, that is, $\cd(G)=\{\chi(1):\chi\in
\Irr(G)\}.$ The \emph{degree pattern} of $G,$ denoted by $\textrm{X}_1(G),$ is the set of all
irreducible character degrees of $G$ counting multiplicities. Observe that $\textrm{X}_1(G)$ is the
first column of the ordinary character table of $G.$ We follow \cite{atlas} for notation of
nonabelian simple groups.

In \cite[Problem $2^*$]{Brauer}, R. Brauer asked whether two groups $G$ and $H$ are isomorphic given
that two group algebras $\mathbb{F}G$ and $\mathbb{F}H$ are isomorphic for all fields $\mathbb{F}.$
This is false in general. In fact, E.C. Dade \cite{Dade} constructed two non-isomorphic metabelian
groups $G$ and $H$ such that $\mathbb{F}G\cong \mathbb{F}H$ for all fields $\mathbb{F}.$ In
\cite{Hertweck}, M. Hertweck showed that this is not true even for the integral group rings. Note
that if $\Z G\cong \Z H,$ then $\mathbb{F}G\cong \mathbb{F}H$ for all fields $\mathbb{F},$ where
$\Z$ is the ring of integer numbers. For nonabelian simple groups, W. Kimmerle obtained a positive
answer in \cite{Kimmerle02}. He outlined the proof asserting  that if $G$ is a group and $H$ is a
nonabelian simple group such that $\mathbb{F}G\cong \mathbb{F}H$ for all fields $\mathbb{F}$ then
$G\cong H.$ We now consider the same problem but only assume that the complex group algebras of the
two groups are isomorphic. We note that by Molien's Theorem, knowing the structure of the complex
group algebra is equivalent to knowing the first column of the ordinary character table.

\begin{question}\label{quest1} Which groups can be uniquely determined by the structure of their complex group
algebras?
\end{question}

For example, it was shown in \cite{Hung2} that the symmetric groups are uniquely determined by the
structure of their complex group algebras. Independently, this result was also proved by Nagl in \cite{Nagl11}. It was conjectured that all nonabelian simple
groups are uniquely determined by the structure of their complex group algebras. This conjecture was verified in \cite{Hung1,Hung3} for the
alternating groups, the sporadic simple groups, the Tits group and the
simple exceptional groups of Lie type.  As pointed out by the referee, a sketch for the case of alternating and sporadic simple groups was
already given in \cite{Kimmerle02} and the case for the alternating groups has
been worked out in \cite{Nagl08}. The following stronger conjecture was proposed by Huppert in
\cite{Hupp}.

\begin{conjecture}\label{HC}  Let $H$
be any nonabelian simple group and $G$ be a group such that $\cd(G)=\cd(H).$ Then $G\cong H\times
A,$ where $A$ is abelian.
\end{conjecture}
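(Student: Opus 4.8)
The plan is to follow the five-step program that Huppert introduced for verifying this conjecture for individual simple groups, arranging it so that the only global input is the classification of finite simple groups and its consequences for character degrees. Since $\cd(G)=\cd(H)$ and $H$ is perfect, the linear characters of $G$ already account for $|G:G'|$, so the whole argument concerns the nonlinear degrees. The first step is to show that $G'=G''$. If not, then $G'/G''$ is a nontrivial abelian normal section and $G/G''$ is a nontrivial solvable quotient whose degree set lies in $\cd(H)$; applying the It\^o--Michler theorem together with the explicit prime content of $\cd(H)$ (every prime dividing $|H|$ must divide some degree) one produces a nonlinear degree of $G/G''$ incompatible with the coprimeness and divisibility structure recorded in $\cd(H)$, a contradiction. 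Hence $G'$ is perfect.

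Second, I would choose $M\trianglelefteq G$ maximal among the normal subgroups properly contained in $G'$, so that $G'/M$ is a chief factor of $G$; perfectness forces $G'/M\cong S^{k}$ for some nonabelian simple group $S$. The crux is to prove $k=1$ and $S\cong H$. Here I would use Clifford theory to push the degrees of $S^{k}$, extended through the relevant section of $\Aut(S)\wr \mathrm{Sym}(k)$, into $\cd(H)$, and then invoke, for the particular $H$ under consideration, the fact that no simple group other than $H$ has all of its degrees dividing degrees of $H$ in the required fashion, together with the observation that $k\ge 2$ forces a product degree of the form $(\deg)^{k}$ lying outside $\cd(H)$. This identification is exactly the point at which one must consult the classification and the tabulated degree patterns of the simple groups.

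Third and fourth, with $G'/M\cong H$ in hand I would analyze the normal subgroup $M$ and the action of $G$ upon it. Setting $C=C_{G}(G'/M)$ one has $G'/(G'\cap C)\hookrightarrow\Aut(H)$, and I would show that any nontrivial irreducible constituent of a faithful section of $M$ would, via Gallagher's theorem and Clifford theory, contribute to $\cd(G)$ a degree not present in $\cd(H)$, typically because multiplying it against a degree of $H$ either exceeds the largest element of $\cd(H)$ or introduces a new prime. Hence $M=1$ and $G'\cong H$. Finally, to obtain $G\cong H\times A$, I would prove $G=G'\times C_{G}(G')$ with $A:=C_{G}(G')$ abelian: the quotient $G/C_{G}(G')$ embeds in $\Aut(H)$, and comparing $\cd(G)$ with $\cd(\Aut(H))\supseteq\cd(H)$ forces $G=G'C_{G}(G')$, while a nonlinear degree of $A$ would multiply some degree of $H$ out of $\cd(H)$, so $A$ is abelian and $G\cong H\times A$.

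The main obstacle is unmistakably the second step: the reduction of an arbitrary nonabelian chief factor $S^{k}$ to a single copy with $S\cong H$. For a fixed $H$ this is a finite verification, but for the conjecture as worded, with $H$ \emph{any} nonabelian simple group, it demands a uniform recognition of every simple group from the set of its character degrees, and no argument is known that avoids a case-by-case analysis resting on the classification. This is precisely why the statement remains a conjecture in general and has so far been settled only family by family, which is the situation the present paper improves upon under the stronger degree-multiset hypothesis.
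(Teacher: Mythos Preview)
The statement you are asked to prove is labelled \emph{Huppert's Conjecture} in the paper and is explicitly declared to be open: the paper says ``However this conjecture is still open.'' There is therefore no proof in the paper to compare your proposal against. The paper's actual main result, Theorem~\ref{main}, is strictly weaker: it assumes the degree \emph{multiset} equality $\textrm{X}_1(G)=\textrm{X}_1(H)$ rather than merely $\cd(G)=\cd(H)$, and this additional hypothesis gives $|G|=|H|$ for free, which is exactly what makes the argument go through (via Lemma~\ref{Fitting}, Artin's order theorem, and the Steinberg-degree isolation of Lemma~\ref{isolated}).

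Your outline is an accurate summary of Huppert's five-step programme, and your final paragraph is the honest diagnosis: Step~2 (identifying the chief factor $S^k$ with $H$) is precisely where the programme has no uniform argument and must be carried out family by family. So what you have written is not a proof of the conjecture but a description of the method by which partial cases have been handled, together with a correct explanation of why it does not yield the general statement. That is the right assessment; just be aware that the paper never claims to prove this conjecture, and you should not present your outline as a proof either.
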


Obviously, if Huppert's Conjecture were true then all nonabelian simple groups would be uniquely
determined by the structure of their complex group algebras. However this conjecture is still open.
In \cite{Hupp} and his preprints, Huppert himself verified the conjecture for $L_2(q),Sz(q^2),$
$18$ out of $26$ sporadic simple groups, and several small alternating and simple groups of Lie
type.
Also T. Wakefield verified the conjecture for $L_3(q),U_3(q)$ and $S_4(q)$
in \cite{Wake, Wake2}. Recently, Huppert's Conjecture for the remaining simple groups of Lie type
of rank $2$ was verified by T. Wakefield and the author, see \cite{HW1} and the references therein.

The main purpose of this paper is to complete the proof of the following result.

\begin{theorem}\label{main}  Let $H$ be a nonabelian
simple  group and $G$ be a finite group. If $\emph{\textrm{X}}_1(G)=\emph{\textrm{X}}_1(H),$ then
$G\cong H.$
\end{theorem}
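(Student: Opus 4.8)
The plan is to reduce Theorem~\ref{main} to the case where $H$ is a simple classical group of Lie type, since all other cases are already established in the literature cited above. Indeed, if $H$ is an alternating group, a sporadic simple group, the Tits group, or a simple exceptional group of Lie type, then the hypothesis $\textrm{X}_1(G) = \textrm{X}_1(H)$ — which is strictly stronger than $\cd(G) = \cd(H)$ together with matching multiplicities — forces $G \cong H$ by \cite{Hung1,Hung2,Hung3}. Likewise $L_2(q)$ and $Sz(q^2)$ are covered. So the entire content of this paper is the classical-group case, and the first step is simply to record this reduction and fix notation: write $|H| = \sum_{\chi \in \Irr(H)} \chi(1)^2$ and note that $\textrm{X}_1(G) = \textrm{X}_1(H)$ immediately yields $|G| = |H|$ and $\cd(G) = \cd(H)$, and moreover that $G$ and $H$ have the same number of irreducible characters of each degree.

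The core strategy I would follow is the four-step Huppert-type scheme, adapted to exploit the multiplicity information. \textbf{Step 1:} Show $G' = G''$, so that if $G/G'$ were nontrivial one could analyze $\Irr(G/G')$; combined with the multiplicity-one condition on the degree $1$ (since $H$ is simple, $1 \in \textrm{X}_1(H)$ with multiplicity exactly one, forcing $|G/G'| = 1$, i.e. $G = G'$ is perfect). This is where $\textrm{X}_1$ rather than $\cd$ does real work: it pins down $G$ as perfect immediately. \textbf{Step 2:} Let $M$ be a maximal normal subgroup of $G$, so $G/M$ is nonabelian simple; using the set of degrees and the prime divisors of $|G| = |H|$, identify $G/M \cong H$. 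This requires the classification-based results on which simple groups have a prescribed degree set, together with a careful argument that $|H|$ divides $|G/M|$ up to the order of $M$. \textbf{Step 3:} Show $M = 1$ by a character-theoretic argument: if $M \neq 1$, extend/induce characters between $M$, $G$, and $G/M \cong H$ and derive a degree in $\textrm{X}_1(G)$ not present in $\textrm{X}_1(H)$, or a multiplicity mismatch — again the counting refinement is crucial for ruling out nonsplit or split extensions that $\cd$ alone cannot exclude. \textbf{Step 4:} Conclude $G \cong G/M \cong H$.

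The main obstacle, as in all work on Huppert's conjecture, will be \textbf{Step 2} — the identification of the simple quotient $G/M$ purely from $\cd(H)$ and $|H|$. For classical groups this is delicate because different families (linear, unitary, symplectic, orthogonal) over different fields can have overlapping or numerically similar degree patterns, and one must invoke deep results: the classification of prime graphs, the work on the largest character degree and the Steinberg degree $|H|_p$, and results characterizing classical groups by the set of their irreducible character degrees (building on work of Tong-Viet and others on $\cd$-characterizations). A secondary difficulty is \textbf{Step 3} when $M$ is a $p$-group or a product of copies of a simple group, where one must show that nontrivial such $M$ always perturbs either the degree set or some multiplicity; here I expect to use Gallagher's theorem, Clifford theory, and the precise multiplicity of the Steinberg character (which has multiplicity one and degree $|H|_p$ in $H$) to force $M = 1$. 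I would carry out Steps 1 and 4 quickly, then devote the bulk of the paper to a case division in Step 2 across the classical families, with Step 3 handled by a uniform Clifford-theoretic lemma wherever possible.
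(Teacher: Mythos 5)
Your outline agrees with the paper at the endpoints (the reduction to classical $H$ via \cite{Hung1,Hung2,Hung3}, the observation that $\textrm{X}_1(G)=\textrm{X}_1(H)$ gives $|G|=|H|$, $\cd(G)=\cd(H)$ and that $G$ is perfect, the passage to a maximal normal subgroup $M$ with $G/M$ nonabelian simple, and the use of Clifford theory, Gallagher's theorem and the Steinberg degree against a nontrivial $M$), but the crux of your plan, Step 2, is exactly the point where the proposal has a genuine gap. You propose to identify $G/M\cong H$ directly from $\cd(H)$ and $|H|$, citing only a vague appeal to ``results characterizing classical groups by their degree sets''; no such characterization is available --- indeed it is essentially the statement being proved --- and when $M\neq 1$ one only knows $\cd(G/M)\subseteq\cd(H)$ and $|G/M|$ divides $|H|$, which is far too weak to pin down $G/M$ among the classical families. (Note also that if you could prove $G/M\cong H$, then $M=1$ would follow at once from $|G|=|H|$, so your Steps 2 and 3 as stated are not independent; the hard work cannot be postponed to Step 3.)

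The paper takes a genuinely weaker and therefore feasible intermediate target: Proposition \ref{Reduction} shows only that $G/M$ is a simple group of Lie type in the \emph{same characteristic} $p$ as $H$, by eliminating sporadic, alternating and cross-characteristic Lie type quotients using prime-power-degree classifications \cite{Malle}, minimal-degree bounds \cite{TZ}, Zsigmondy primes, and the isolation of the Steinberg degree (Lemma \ref{isolated}). Then the case $M\neq 1$ is killed without ever identifying $G/M$: Lemma \ref{Fitting} (resting on defect-zero characters \cite{Willems} and Ito's theorem) forces a minimal normal subgroup $K\cong S^k$ to be nonabelian, a character of $S$ extending to $\Aut(S)$ \cite{Bia} plus Gallagher produce a degree $\varphi(1)|H|_p$ contradicting the isolated Steinberg degree. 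Finally, when $M=1$ the equality $|G|=|H|$ lets Artin's theorem on orders of simple groups \cite{Kimmerle} do the identification, leaving only the ambiguity $\{S_{2n}(q),O_{2n+1}(q)\}$ (and $\{L_4(2),L_3(4)\}$), which is resolved by the Weil character degrees $(q^n\pm1)/2$ versus the bound $d_1(O_{2n+1}(q))>(q^n+1)/2$ from \cite{TZ}. Your proposal contains none of these three ingredients (the weaker reduction, the defect-zero/Fitting argument, and the Artin-theorem plus Weil-character endgame), and without them the plan as written cannot be carried out.
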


As the alternating groups, the sporadic simple groups, the Tits
group and the simple exceptional groups of Lie type have been
handled in \cite{Hung1,Hung3}, we only need to consider the simple
classical groups of Lie type. Now Theorem \ref{main} and
\cite[Theorem~ $2.13$]{Ber1} yield:

\begin{corollary} Let $G$ be a group and let $H$ be a nonabelian
simple group. If $\C G\cong \C H,$ then $G\cong H.$
\end{corollary}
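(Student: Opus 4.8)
By \cite[Theorem~$2.13$]{Ber1} the isomorphism $\C G\cong\C H$ is equivalent to the equality of degree patterns $\textrm{X}_1(G)=\textrm{X}_1(H)$, so the Corollary is immediate once Theorem~\ref{main} is known. For Theorem~\ref{main}, the alternating groups, the sporadic simple groups, the Tits group and the simple exceptional groups of Lie type have already been settled in \cite{Hung1,Hung3}, so we may assume that $H$ is a simple classical group of Lie type and must show that $\textrm{X}_1(G)=\textrm{X}_1(H)$ forces $G\cong H$.

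The first step is to extract the coarse invariants carried by $\textrm{X}_1(G)=\textrm{X}_1(H)$. Summing the squares of the entries gives $|G|=\sum_{\chi\in\Irr(G)}\chi(1)^2=|H|$, and counting the entries equal to $1$ gives $|G/G'|=|H/H'|=1$, so $G$ is perfect. Now pick a maximal normal subgroup $M\trianglelefteq G$; since $G$ is perfect, $S:=G/M$ is a nonabelian simple group. Every $\theta\in\Irr(S)$ inflates to a character of $G$ of the same degree, and distinct characters of $S$ inflate to distinct characters of $G$, so the multiset of degrees of $S$ is a sub-multiset of $\textrm{X}_1(H)$; in particular $\cd(S)\subseteq\cd(H)$ and $|S|$ divides $|G|=|H|$.

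The heart of the matter is to prove that these conditions force $S\cong H$. Here one invokes the classification of finite simple groups together with detailed information about $\cd(H)$ for $H$ classical: the Steinberg character contributes the prime-power degree $|H|_p=p^a$ (with $p$ the defining characteristic), the small unipotent and semisimple characters contribute degrees such as $(q^n-1)/(q-1)$ and its immediate relatives, and several characters are divisible by primitive prime divisors (Zsigmondy primes) of $q^i-1$ for the relevant exponents $i$. Using (i) the divisibility $|S|\mid|H|$, which by CFSG already restricts $S$ to a short list, (ii) the location of the prime-power degrees and of the largest degree inside $\cd(H)$, and (iii) the presence of the appropriate Zsigmondy primes, one eliminates in turn the alternating and sporadic groups, the exceptional groups, and every classical group belonging to a different family, or of smaller rank, or over a smaller field than $H$; the classical coincidences of group orders (for instance $|A_8|=|L_4(2)|=|L_3(4)|$, and $|S_{2n}(q)|=|O_{2n+1}(q)|$ for odd $q$ and $n\ge3$) and the sporadic isomorphisms among small classical groups are separated by comparing the full degree multisets. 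It is precisely the use of multiplicities, together with the order identity $|G|=|H|$, that makes this feasible, whereas Huppert's Conjecture --- which uses $\cd(H)$ alone --- is still open for these groups. This case-by-case elimination across all families is the principal technical obstacle.

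Once $S\cong H$ is established, the proof ends at once: $|M|=|G|/|S|=1$ because $|G|=|H|=|S|$, so $M=1$ and $G=S\cong H$. This proves Theorem~\ref{main}, and the Corollary follows as noted above.
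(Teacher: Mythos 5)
Your reduction of the Corollary to Theorem \ref{main} via \cite[Theorem~2.13]{Ber1} is exactly the paper's, and your opening moves for Theorem \ref{main} (perfection of $G$, $|G|=|H|$, choice of a maximal normal subgroup $M$ with $S=G/M$ simple, $\cd(S)\subseteq\cd(H)$ and $|S|\mid |H|$) also match. The problem is the step you call ``the heart of the matter'': you assert that $\cd(S)\subseteq\cd(H)$ together with $|S|\mid|H|$ forces $S\cong H$, and you support this only with a hand-wave about eliminating ``every classical group belonging to a different family, or of smaller rank, or over a smaller field than $H$.'' No argument is given for that elimination, and it is precisely the part the paper deliberately avoids proving. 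Proposition \ref{Reduction} only shows that $G/M$ is \emph{some} simple group of Lie type in the defining characteristic $p$ of $H$; it does not, and cannot cheaply, identify $G/M$ with $H$. Ruling out same-characteristic groups of smaller rank or smaller field by degree-set containment alone is a genuinely hard non-containment problem, and your appeal to multiplicities does not help here, since for the quotient $S$ you only know its degrees form a sub-multiset of $\textrm{X}_1(H)$. Note also that at this stage you only have one-sided containment $\cd(S)\subseteq\cd(H)$, so even the order coincidence $\{S_{2n}(q),O_{2n+1}(q)\}$ would require a non-containment in the direction opposite to the one the paper establishes (the paper shows $\cd(S_{2n}(q))\nsubseteq\cd(O_{2n+1}(q))$, which suffices only once one knows $\cd(G)=\cd(H)$ with $M=1$).

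The paper's actual route is structured to sidestep your missing step. If $M=1$, then $G$ is simple of Lie type in characteristic $p$ with $|G|=|H|$, and Artin's theorem on orders \cite[Theorem~5.1]{Kimmerle} pins down $G$ up to the two classical coincidences, which are then separated using the Weil characters of $Sp_{2n}(q)$ and the minimal-degree bounds of Tiep--Zalesskii. If $M\neq 1$, the paper never identifies $G/M$: it takes a minimal normal subgroup $K\leq M$, shows $K$ is nonabelian via Lemma \ref{Fitting} (defect-zero characters plus Ito's theorem, using $|G|=|H|$), extends a character of $K$ to $G$ by Lemmas \ref{charext} and \ref{extend}, and then Gallagher's theorem combined with the isolated-degree property of the Steinberg degree $|H|_p$ (Lemma \ref{isolated}) produces a proper multiple of $|H|_p$ in $\cd(G)$, a contradiction. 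Your sketch omits this entire mechanism (you never need the Fitting/Ito lemma or the extension lemmas because you assume $S\cong H$ kills $M$ by order), so as written it rests on an unproven and substantially stronger intermediate claim.
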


Thus all nonabelian simple groups are uniquely determined by the structure of their complex group
algebras.
We mention that abelian  groups are not determined by the structure of their complex group
algebras. In fact the complex group algebras of any two abelian groups of the same orders are
isomorphic. There are also examples of nonabelian $p$-groups with isomorphic complex group
algebras, for example the dihedral  group of order $8$ and the quaternion group of order $8.$
However it is conjectured that if $X_1(G)=X_1(H)$ and $H$ is solvable, then $G$ is also solvable.
We can ask the same question for the character degree sets instead of the degree patterns. Finally
the same question could be asked for the finite field of prime order. This problem for $p$-groups due to G. Higman is known as the {Modular Isomorphism Problem} and is still open.

\begin{notation} 
If $\cd(G)=\{s_0,s_1,\cdots,s_t\},$ where
$1=s_0<s_1<\cdots<s_t,$ then we define $d_i(G)=s_i$ for all $1\leq i\leq t.$  Then $d_i(G)$ is the
$i^{th}$ smallest degree of the nontrivial character degrees of $G.$ The
largest character degree of $G$ is denoted by $b(G).$ If $n$ is an integer then we denote by $\pi(n)$ the set of all
prime divisors of $n,$ and by $n_p,$ the largest $p$-part of $n,$ where $p$ is a prime. We will write $\pi(G)$ to denote the set of all distinct prime
divisors of the order of $G.$ If $N\unlhd G$ and $\theta\in \Irr(N),$ then the inertia group of
$\theta$ in $G$ is denoted by $I_G(\theta).$ The set of all irreducible constituents of $\theta^G$
is denoted by $\Irr(G|\theta).$  Other notation is standard.
\end{notation}

\section{Preliminaries}\label{sec1}
We refer to \cite[$13.8,13.9$]{car85} for the classification of unipotent characters and the notion
of symbols. Note that every simple group of Lie type $S$ in characteristic $p$ (excluding the Tits
group) possesses an irreducible character of degree $|S|_p,$ which is the size of the Sylow
$p$-subgroup of $S,$ and is called the \emph{Steinberg} character of $S$ and is denoted by $St_S.$
We first show that generically, the degree of the Steinberg character of $S$ is not the second
smallest nontrivial character degree of $S.$

\begin{lemma}\label{Lem1} If $S\neq {}^2F_4(2)'$ is a simple group of Lie type  in characteristic
$p$ defined over a field of size $q,$ then $|S|_p>d_2(S)$ whenever $S\neq L_2(2^f)$ and
$d_2(L_2(2^f))=2^f.$
\end{lemma}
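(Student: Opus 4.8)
The plan is to argue by going through the list of simple groups of Lie type according to their Lie rank, using explicit knowledge of the small unipotent character degrees. First I would dispose of the groups of small rank, say rank $1$ and $2$, by direct inspection of the known character tables or character degree sets: for $L_2(q)$ with $q$ even we get $\cd(L_2(2^f)) = \{1, 2^f-1, 2^f, 2^f+1\}$, so $d_2(L_2(2^f)) = 2^f = |L_2(2^f)|_p$, which is exactly the stated exception; for $L_2(q)$ with $q$ odd the two smallest nontrivial degrees are $(q-1)/2$-ish and are strictly smaller than $q = |S|_p$. For the remaining rank $\leq 2$ groups ($L_3(q)$, $U_3(q)$, $S_4(q)$, $G_2(q)$, ${}^3D_4(q)$, ${}^2B_2(q^2)$, ${}^2G_2(q^2)$, ${}^2F_4(q^2)$, and the excluded ${}^2F_4(2)'$) I would just cite the explicitly known degree sets and check that $d_2(S) < |S|_p$ in each case. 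Here $|S|_p = q^N$ where $N$ is the number of positive roots, which is at least $3$ already for $L_3(q)$, so $|S|_p$ is a fairly large power of $q$ while $d_2(S)$ is typically linear or quadratic in $q$.

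For groups of rank at least $3$ the cleanest approach is to produce, on the one hand, a small unipotent character whose degree is a controlled polynomial in $q$, and on the other hand to note that $|S|_p = q^N$ with $N$ large. The natural candidate for a small degree is the unipotent character labelled by the partition/symbol ``next to'' the trivial one; for the classical groups $A_{n-1}$, $B_n$, $C_n$, $D_n$ this gives a character of degree roughly $\frac{q^{n}-1}{q-1} - 1$ or $\frac{(q^n-1)(q^{n-1}+q)}{q^2-1}$ type expressions, in any case a polynomial in $q$ of degree at most $n$ or so. Meanwhile $N = n(n-1)/2$, $n^2$, $n^2$, $n(n-1)$ respectively for these families, which grows quadratically. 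So for $n \geq 3$ one has $\deg_q d_2(S) \leq$ (something linear or quadratic with small leading behaviour) $< N$, hence $d_2(S) \leq$ that small degree $< q^N = |S|_p$ for all $q \geq 2$. The exceptional groups $F_4, E_6, {}^2E_6, E_7, E_8$ of rank $\geq 4$ are finitely many families and can be handled by quoting the smallest unipotent degree from the tables in \cite{car85} and comparing with $q^N$.

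The step I expect to be the main obstacle is making the comparison $d_2(S) < |S|_p$ completely rigorous for the smallest values of $q$ (namely $q = 2$, and $q = 2,3$ for the low-rank cases), since for small $q$ the various polynomial degree bounds are not yet decisive and one must be careful that $d_2(S)$ really is given by the unipotent character one has in mind rather than by some even smaller degree coming from a non-unipotent (semisimple or mixed) character, or from a proper cover issue — though since $S$ is simple here we only deal with $S$ itself. Concretely, I would need the known classification of the minimal and second-minimal character degrees of simple classical groups (results of Tiep–Zalesskii and Nguyen, say) to be sure that $d_2(S)$ is what I claim, and then verify the inequality for the handful of small $q$ by hand. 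A secondary nuisance is bookkeeping: isogeny versions and the coincidences among small classical groups ($L_2(7) \cong L_3(2)$, $S_4(2)' \cong A_6$, $U_4(2) \cong S_4(3)$, etc.) must be matched up so that no case is double-counted or missed, and the genuine exception $S = {}^2F_4(2)'$ is correctly set aside as in the statement.
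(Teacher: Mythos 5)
Your overall plan (split by family, exhibit small-degree characters, compare with $q^N=|S|_p$) is in the same spirit as the paper, but the central step as you state it has a genuine flaw: for the groups of rank $\geq 3$ you produce \emph{one} small unipotent character and then conclude ``hence $d_2(S)\leq$ that small degree $<q^N$.'' Exhibiting a single nontrivial degree below $|S|_p$ only bounds $d_1(S)$; it says nothing about $d_2(S)$, which is the \emph{second} smallest nontrivial degree. To get $d_2(S)<|S|_p$ you must exhibit \emph{two distinct} nontrivial degrees, both less than $|S|_p$. This is exactly why the paper's Table~\ref{Tab2} lists two unipotent characters (with their symbols, computed from \cite[13.8]{car85}) for each classical family of rank at least $2$, why the $L_3(q)$ and $U_3(q)$ cases are handled by naming two explicit degrees ($q^2+q$, $q^2+q+1$, resp. $q(q-1)$, $q^2-q+1$) below $q^3$, and why the exceptional groups are dispatched by \cite{Lub}, which gives the several smallest degrees, not just the minimal one.

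Your proposed repair --- identifying $d_2(S)$ exactly via the classification of minimal and second-minimal degrees --- would in principle close the gap, but it is both heavier than necessary and, as you sketch it, factually off: for many classical groups the second smallest degree is \emph{not} the unipotent character ``next to'' the trivial one (for instance, for $S_{2n}(q)$ with $q$ odd the two smallest nontrivial degrees are the Weil character degrees $(q^n-1)/2$ and $(q^n+1)/2$, which are not unipotent). The lemma never requires knowing what $d_2(S)$ actually is; it only requires an upper bound, and the cheapest such bound comes from writing down any two nontrivial degrees below $|S|_p$, as the paper does. If you rework your argument to name two unipotent degrees per family (and keep your explicit treatment of $L_2(q)$, $L_3(q)$, $U_3(q)$, and the small-$q$/isomorphism bookkeeping, which matches the paper's use of \cite{atlas} for $L_3(2),L_3(3),L_3(4)$), the proof goes through.
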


\begin{proof} As $\cd(L_2(2^f))=\{1,2^f-1,2^f,2^f+1\},$ we have that $d_2(L_2(2^f))=2^f.$
If $S=L_2(q)$ with $q$ odd, then $d_1(L_2(q))=(q+\delta )/2$ where $q\equiv \delta \mbox{(mod
$4$)},$ and $d_2(S)=q-1.$ If $S=L_3(2), L_3(3)$ or $L_3(4),$ then the result follows from
\cite{atlas}. If $S=L_3(q)$ with $q\geq 5,$ then $S$ possesses characters of degrees $q^2+q$ and
$q^2+q+1$ which are less than $St_S(1)=q^3,$ and if $S=U_3(q)$ with $q\geq 3,$ then $S$ possesses
characters of degrees $q(q-1)$ and $q^2-q+1$ which are less than $q^3=St_S(1)$ (see \cite{Wake}).
For the remaining simple groups of Lie type, if $S$ is a simple exceptional group of Lie type then
the result follows from \cite{Lub}, and if $S$ is a simple classical group of Lie type of Lie rank
at least $2,$ the result follows from Table \ref{Tab2}. The two unipotent character degrees
together with the symbols in Table \ref{Tab2} were obtained by using \cite[13.8]{car85}.
\end{proof}

\begin{lemma}\emph{(Zsigmondy \cite{Zsi}).}\label{Zsigmondy} Let $q\geq 3,n\geq 3$ be integers.
Then $q^n-1$ has a prime $\ell$ such that $\ell\equiv 1$ \emph{(mod $n$)} and $\ell$ does not
divide $q^m-1$ for any $m<n.$
\end{lemma}
Such an $\ell$ is called a \emph{primitive prime divisor}. The following result is an easy
consequence of the previous lemma.

\begin{lemma}\label{sols} Let $n\geq 3$ be an integer and let $q\geq 3$ be an odd prime power.
Then $(q^n-1)/(q-1),(q^n+1)/(q+1)$ and $(q^n+1)/2$ cannot be $2$- powers.
\end{lemma}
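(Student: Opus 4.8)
The plan is to derive all three claims from a single application of Zsigmondy's theorem (Lemma~\ref{Zsigmondy}), using the observation that a Zsigmondy prime is automatically odd in the ranges that occur here. In each case I argue by contradiction, assuming the quantity in question equals $2^a$ for some integer $a\geq 0$.

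For $(q^n-1)/(q-1)$: since $q\geq 3$ and $n\geq 3$, Lemma~\ref{Zsigmondy} provides a prime $\ell$ dividing $q^n-1$ with $\ell\equiv 1\pmod{n}$ and $\ell\nmid q^m-1$ for all $m<n$. From $\ell\equiv 1\pmod{n}$ and $n\geq 3$ we get $\ell\geq n+1\geq 4$, so $\ell$ is odd; and taking $m=1$ gives $\ell\nmid q-1$. If $(q^n-1)/(q-1)=2^a$, then $q^n-1=(q-1)2^a$, so the odd prime $\ell$ divides $(q-1)2^a$ and hence divides $q-1$, a contradiction.

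For $(q^n+1)/(q+1)$ and $(q^n+1)/2$: I apply Lemma~\ref{Zsigmondy} to $q^{2n}-1$, which is legitimate since $q\geq 3$ and $2n\geq 6\geq 3$. Let $\ell$ be the resulting prime, so $\ell\equiv 1\pmod{2n}$ (hence $\ell\geq 2n+1\geq 7$ is odd), $\ell\mid q^{2n}-1$, and $\ell\nmid q^m-1$ for all $m<2n$. Since $q^{2n}-1=(q^n-1)(q^n+1)$ and $\ell\nmid q^n-1$, we obtain $\ell\mid q^n+1$; and since $2<2n$ we have $\ell\nmid q^2-1=(q-1)(q+1)$, so in particular $\ell\nmid q+1$. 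Now if $(q^n+1)/(q+1)=2^a$, then $q^n+1=(q+1)2^a$, forcing the odd prime $\ell$ to divide $q+1$, a contradiction; and if $(q^n+1)/2=2^a$, then $q^n+1=2^{a+1}$, forcing $\ell=2$, again a contradiction.

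There is no serious obstacle here: the only points requiring a moment's care are that the Zsigmondy prime is odd (it is congruent to $1$ modulo an integer that is at least $3$) and that it avoids the ``small'' factors $q-1$ and $q+1$, which is exactly the primitivity clause. For completeness I would note that when $n$ is odd one may instead argue that $(q^n-1)/(q-1)$ and $(q^n+1)/(q+1)$ are sums of an odd number of odd terms, hence are odd and greater than $1$, so they cannot be $2$-powers; but the Zsigmondy argument above is uniform, and it also covers even $n$ as well as the quantity $(q^n+1)/2$, for which this parity shortcut does not apply.
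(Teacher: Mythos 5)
Your proof is correct and follows essentially the same route as the paper: apply Zsigmondy to $q^n-1$ for the first quantity and to $q^{2n}-1$ for the other two, noting that the primitive prime is odd and does not divide $q-1$ (respectively $q+1$), which rules out the quantities being $2$-powers. The only cosmetic difference is that you phrase the contradiction via the factorizations $q^n\mp 1=(q\mp 1)2^a$ rather than saying the primitive prime divides the quotient, which is equivalent.
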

\begin{proof}
As $q$ is odd and $n\geq 3,$ $q^n-1$ has a primitive prime divisor $\ell$ such that $\ell-1$ is
divisible by $n$ so that $\ell\geq n+1\geq 4$ and also by definition of primitive prime divisor, we
have that $\ell\nmid q-1$ so that $\ell\geq 5$ is a divisor of $(q^n-1)/(q-1),$ and hence
$(q^n-1)/(q-1)$ cannot be a power of $2.$ Observe that if $\ell$ is a primitive prime divisor of
$q^{2n}-1=(q^n-1)(q^n+1)$ then $\ell$ must divide $q^n+1.$ Moreover $\ell\nmid q^2-1$ so that
$\ell\nmid q+1.$ Hence $\ell\mid (q^n+1)/(q+1)$ and $\ell\geq 2n+1\geq 7.$ Thus $(q^n+1)/(q+1)$
cannot be a $2$-power. Finally, if $(q^n+1)/2$ is a $2$-power then $q^n+1$ is also a $2$-power.
Applying the same argument as above, we obtain a contradiction.
\end{proof}

The next result gives a lower bound for the largest character degree of the alternating groups. A
proof can be found in \cite[Lemma $2.2$]{Hung3}.
\begin{lemma}\label{Lem3} If $n\geq 10,$ then $b(A_n)\geq 2^{n-1}.$
\end{lemma}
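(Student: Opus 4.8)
The plan is to compare $A_n$ with $S_n$ and then use a dimension count. Since the restriction to $A_n$ of any $\chi\in\Irr(S_n)$ is either irreducible or a sum of two irreducible characters of the same degree, we have $b(A_n)\ge b(S_n)/2$. Moreover $\Irr(S_n)$ is parametrized by the partitions of $n$, so $|\Irr(S_n)|$ equals $p(n)$, the number of partitions of $n$; combining this with $\sum_{\chi\in\Irr(S_n)}\chi(1)^2=|S_n|=n!$ gives $p(n)\,b(S_n)^2\ge n!$, and hence $b(A_n)\ge\frac12\sqrt{n!/p(n)}$. It therefore suffices to prove $n!\ge 4^np(n)$, which is exactly equivalent to $\frac12\sqrt{n!/p(n)}\ge 2^{n-1}$.

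For large $n$ this is easy. Every partition of $n$ is in particular a composition of $n$, so $p(n)\le 2^{n-1}$ and therefore $4^np(n)\le 2^{3n-1}$; thus it is enough to check $n!\ge 2^{3n-1}$. Taking logarithms and using $\log n!\ge n\log n-n$, one sees that $n!\ge 2^{3n-1}$ holds for every $n\ge 19$, and for the short remaining range $14\le n\le 18$ one substitutes the actual (small) values of $p(n)$ and verifies the inequality $n!\ge 4^np(n)$ numerically. Hence $b(A_n)\ge 2^{n-1}$ for all $n\ge 14$.

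For $10\le n\le 13$ the counting bound is slightly too weak — there it can underestimate $b(S_n)$ by almost a factor $p(n)^{1/2}$, which is not negligible when $n$ is small — so I would instead exhibit explicit characters via the hook-length formula. One finds $\chi^{(5,2,2,1)}(1)=525$, $\chi^{(5,3,2,1)}(1)=2310$, $\chi^{(6,3,2,1)}(1)=5632$ and $\chi^{(6,4,2,1)}(1)=17160$ for $n=10,11,12,13$, respectively; each of these partitions is \emph{not} self-conjugate (its conjugate is $(4,3,1,1,1)$, $(4,3,2,1,1)$, $(4,3,2,1,1,1)$, $(4,3,2,2,1,1)$), so the associated character of $S_n$ restricts irreducibly to $A_n$, and in each case the degree exceeds $2^{n-1}$. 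Alternatively one can simply read off the degrees of $A_{10},\dots,A_{13}$ from \cite{atlas}.

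The argument has no real difficulty, but two points require care, and these are where I expect the work to go. First, the clean inequality $b(A_n)\ge b(S_n)/2$ is lossy precisely in the range where $p(n)$ is too small for the counting bound, so a short finite verification seems unavoidable, and in that range one must make sure to pick partitions that are not self-conjugate, so that no further factor of $2$ is lost on restriction to $A_n$. Second, one must fix a clean explicit upper bound for $p(n)$ — here $p(n)\le 2^{n-1}$, or the sharper $p(n)<e^{\pi\sqrt{2n/3}}$ if one wishes to push the threshold lower — and match it against the elementary lower bound for $\log n!$; this is the only place where an estimate, rather than a direct computation, enters.
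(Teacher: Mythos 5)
Your argument is correct, and it is genuinely self-contained, which is more than the paper does: the paper does not prove this lemma at all, it simply quotes it from \cite[Lemma 2.2]{Hung3}. Your route — Clifford theory giving $b(A_n)\ge b(S_n)/2$, the second orthogonality relation giving $p(n)\,b(S_n)^2\ge n!$, the crude bound $p(n)\le 2^{n-1}$, and explicit hook-length computations with non-self-conjugate partitions for $10\le n\le 13$ — is elementary and checks out: the degrees $525,\,2310,\,5632,\,17160$ are correct, the conjugate partitions are computed correctly, so the corresponding characters of $S_n$ do restrict irreducibly to $A_n$, and each degree exceeds $2^{n-1}$. The reduction to $n!\ge 4^n p(n)$ and the verification of that inequality for $14\le n\le 18$ are also correct (it genuinely fails at $n=13$, so your explicit-character step is needed exactly where you placed it).

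One small numerical point to patch: the Stirling-type bound $\log n!\ge n\log n-n$ does \emph{not} give $n!\ge 2^{3n-1}$ already at $n=19$; the inequality $n\ln n-n\ge(3n-1)\ln 2$ only takes over around $n=22$. The claim itself is fine — $19!>2^{56}$, and since each step multiplies the left side by $n+1>8$, induction carries it forward — so either start the induction at $n=19$ with that one direct check, or extend your finite verification of $n!\ge 4^np(n)$ up to $n=21$. With that cosmetic repair the proof is complete.
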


We also need some results in character theory, in particular Clifford theory.
\begin{lemma}\emph{(\cite[Lemma $3$]{Hupp}).}\label{Clifford} Suppose $N\unlhd
G$ and $\theta\in {\rm{Irr}}(N).$ Let $I=I_G(\theta).$

$(a)$ If $\theta^I=\sum_{i=1}^k\varphi_i,$ $\varphi_i\in {\rm{Irr}}(I),$ then $\varphi_i^G\in
{\rm{Irr}}(G)$ and $|G:I|\varphi_i(1)\in\cd(G).$

$(b)$ If $\theta$ extends to $\psi\in {\rm{Irr}}( I),$ then $(\psi\tau )^G\in {\rm{Irr}}(G)$ for
all $\tau\in {\rm{Irr}}(I/N).$ In particular, $\theta(1)\tau(1)|G:I|\in {\rm{cd}}(G).$

$(c)$ If $\rho \in {\rm{Irr}}( I)$ such that $\rho_N=e\theta,$ then $\rho=\theta_0\tau_0,$ where
$\theta_0$ is a character of an irreducible projective representation of $I$ of degree $\theta(1)$
while $\tau_0$ is the character of an irreducible projective representation of $I/N$ of degree $e.$
\end{lemma}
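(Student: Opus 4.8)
The plan is to deduce all three parts from standard Clifford theory, essentially as in Isaacs, \emph{Character Theory of Finite Groups}, Chapters~6 and~11. For part $(a)$, I would invoke the Clifford correspondence: since $I=I_G(\theta)$, induction $\psi\mapsto\psi^G$ is a bijection from $\Irr(I\mid\theta)$ onto $\Irr(G\mid\theta)$. By Frobenius reciprocity the members of $\Irr(I\mid\theta)$ are precisely the irreducible constituents $\varphi_1,\dots,\varphi_k$ of $\theta^I$, so each $\varphi_i^G$ is irreducible and, evaluating at the identity, $|G:I|\varphi_i(1)=\varphi_i^G(1)\in\cd(G)$.

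For part $(b)$, assume $\theta$ extends to $\psi\in\Irr(I)$. Then Gallagher's theorem gives that $\tau\mapsto\psi\tau$, with $\tau\in\Irr(I/N)$ inflated to $I$, is a bijection from $\Irr(I/N)$ onto $\Irr(I\mid\theta)$; in particular every $\psi\tau$ is one of the constituents $\varphi_i$ from part $(a)$. Hence $(\psi\tau)^G\in\Irr(G)$, and its degree is $|G:I|\psi(1)\tau(1)=|G:I|\theta(1)\tau(1)$, which therefore lies in $\cd(G)$.

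For part $(c)$, I would start from a representation $\mathcal{X}$ of $N$ affording $\theta$. Invariance of $\theta$ under $I$ yields, for each $g\in I$, an invertible matrix $P(g)$ with $\mathcal{X}(gng^{-1})=P(g)\mathcal{X}(n)P(g)^{-1}$ for all $n\in N$; after normalising $P$ on $N$, the map $P\colon I\to\mathrm{GL}_{\theta(1)}(\C)$ is a projective representation whose factor set $\alpha$ is constant on cosets of $N$, hence inflated from $I/N$. If $\mathcal{Y}$ affords $\rho$, then $\rho_N=e\theta$ forces $\mathcal{Y}|_N$ to be similar to a block sum of $e$ copies of $\mathcal{X}$, and a direct matrix manipulation (after replacing $\mathcal{Y}$ by a similar representation) produces $Q\colon I\to\mathrm{GL}_e(\C)$ with $\mathcal{Y}=P\otimes Q$. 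The defining relations then force $Q$ to be a projective representation with factor set $\alpha^{-1}$ and $Q|_N$ scalar, so $Q$ descends to a projective representation of $I/N$ of degree $e$; taking traces gives $\rho=\theta_0\tau_0$ with $\theta_0=\operatorname{tr}P$ of degree $\theta(1)$ and $\tau_0=\operatorname{tr}Q$ of degree $e$. Parts $(a)$ and $(b)$ are routine once the Clifford correspondence and Gallagher's theorem are available; the main obstacle is $(c)$, which requires the machinery of projective representations and factor sets, the crux being that the two projective pieces carry mutually inverse factor sets so that their tensor product is an honest linear representation (this is Isaacs, Theorem~11.28).
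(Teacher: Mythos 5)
Your proof is correct and coincides with the intended argument: the paper gives no proof of this lemma, simply citing \cite[Lemma 3]{Hupp}, and that result rests on exactly the ingredients you use --- the Clifford correspondence for part $(a)$, Gallagher's theorem for part $(b)$, and the factorization of characters over an invariant $\theta$ via projective representations with mutually inverse factor sets (Isaacs, Theorem 11.28) for part $(c)$. Nothing further is needed.
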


\begin{lemma}\emph{(\cite[Lemma $2$]{Hupp}).}\label{Gallagher} Suppose $N\unlhd G$ and $\chi\in {\rm{Irr}}(G).$

$(a)$ If $\chi_N=\theta_1+\theta_2+\cdots+\theta_k$ with $\theta_i\in {\rm{Irr}}(N),$ then $k$
divides $|G/N|.$ In particular, if $\chi(1)$ is prime to $|G/N|$ then $\chi_N\in {\rm{Irr}}(N).$

$(b)$ \emph{(Gallagher's Theorem)} If $\chi_N\in {\rm{Irr}}(N),$ then $\chi\psi\in {\rm{Irr}}(G)$
for every $\psi\in {\rm{Irr}}(G/N).$

\end{lemma}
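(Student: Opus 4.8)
The plan is to deduce both parts from Clifford's theorem, the Clifford correspondence, and a couple of standard divisibility/counting facts. \textbf{Part $(a)$.} By Clifford's theorem the constituents $\theta_1,\dots,\theta_k$ of $\chi_N$ are all $G$-conjugate, hence have a common degree $\theta(1)$, where $\theta:=\theta_1$; thus $\chi(1)=k\,\theta(1)$, i.e. $k=\chi(1)/\theta(1)$. It is standard that $\chi(1)/\theta(1)$ divides $|G:N|$: via the Clifford correspondence $\chi=\psi^{G}$ for a unique $\psi\in\Irr(I_G(\theta))$ lying over $\theta$, so that $\chi(1)=|G:I_G(\theta)|\,\psi(1)$, while $\psi(1)/\theta(1)$ divides $|I_G(\theta):N|$ (the classical fact that $\psi(1)/\vartheta(1)\mid|J:M|$ whenever $M\unlhd J$, $\vartheta\in\Irr(M)$ is $J$-invariant, and $\psi\in\Irr(J)$ lies over $\vartheta$); multiplying gives $k=\chi(1)/\theta(1)\mid|G:N|$. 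For the ``in particular'' clause, if $\gcd(\chi(1),|G/N|)=1$, then $k$ — which divides both $\chi(1)=k\,\theta(1)$ and $|G/N|$ — equals $1$, so $\chi_N=\theta_1\in\Irr(N)$.

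\textbf{Part $(b)$ (Gallagher's theorem).} Here $\theta:=\chi_N\in\Irr(N)$ is automatically $G$-invariant and $\chi$ is an extension of it. The engine is the identity
\[
\theta^{G}=(\chi_N)^{G}=\chi\cdot(1_N)^{G}=\chi\sum_{\beta\in\Irr(G/N)}\beta(1)\,\beta=\sum_{\beta\in\Irr(G/N)}\beta(1)\,\chi\beta,
\]
which uses the projection formula $(\chi_N)^{G}=\chi\cdot(1_N)^{G}$ and the fact that $(1_N)^{G}$ is the inflation to $G$ of the regular character of $G/N$. Since $\theta$ is $G$-invariant, Mackey's formula gives $(\theta^{G})_N=|G:N|\,\theta$, so by Frobenius reciprocity $[\theta^{G},\theta^{G}]_G=[\theta,(\theta^{G})_N]_N=|G:N|$. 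On the other hand, writing $\chi\bar\chi=1_G+\Delta$ with $\Delta$ a genuine character (possible since $[\chi\bar\chi,1_G]_G=[\chi,\chi]_G=1$), for $\beta,\gamma\in\Irr(G/N)$ we have
\[
[\chi\beta,\chi\gamma]_G=[(\chi\bar\chi)\beta,\gamma]_G=[\beta,\gamma]_G+[\Delta\beta,\gamma]_G\ge\delta_{\beta\gamma},
\]
because $\Delta\beta$ is a character. Hence
\[
|G:N|=[\theta^{G},\theta^{G}]_G=\sum_{\beta,\gamma}\beta(1)\gamma(1)[\chi\beta,\chi\gamma]_G\ge\sum_{\beta}\beta(1)^{2}[\chi\beta,\chi\beta]_G\ge\sum_{\beta}\beta(1)^{2}=|G:N|,
\]
forcing equality throughout. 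In particular $[\chi\beta,\chi\beta]_G=1$, i.e. $\chi\beta\in\Irr(G)$, for every $\beta\in\Irr(G/N)$, which is the assertion (and, as a by-product, the $\chi\beta$ are pairwise distinct and exhaust $\Irr(G|\theta)$).

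No step is genuinely hard here; the lemma is an assembly of standard Clifford-theoretic tools. The one place warranting care is the inequality chain in $(b)$: one must check at each turn that the class function in question ($\chi\bar\chi-1_G$, and the products $\Delta\beta$) is a genuine, non-virtual character, so that the inequalities point the right way. In $(a)$ the only non-elementary input is the divisibility $\chi(1)/\theta(1)\mid|G:N|$, which I would simply quote (e.g. from Isaacs' \emph{Character Theory of Finite Groups}, Corollaries~6.17 and~11.29); everything else is bookkeeping.
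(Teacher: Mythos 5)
Your proof is correct. The paper does not prove this lemma at all --- it simply cites it as \cite[Lemma 2]{Hupp}, and the underlying standard arguments are exactly the ones you give: for part $(a)$, the Clifford correspondence combined with the divisibility $\psi(1)/\theta(1)\mid |I_G(\theta):N|$ (Isaacs, Corollary 11.29), and for part $(b)$, the classical norm computation $[\theta^G,\theta^G]_G=|G:N|$ forcing each $\chi\beta$ to be irreducible (Isaacs, Corollary 6.17), so your write-up is a faithful, self-contained reconstruction of the cited result rather than a genuinely different route.
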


Let $\chi\in \Irr(G),$ $\chi$ is said to be of \emph{p-defect zero} for some prime $p$ if
$|G|/\chi(1)$ is coprime to $p,$ or equivalently $\chi(1)_r=|G|_r.$ We say that $\chi(1)$ is an \emph{isolated degree} of $G$ if
$\chi(1)$ is divisible by no proper nontrivial character degree of $G,$ and no proper multiple of
$\chi(1)$ is a character degree of $G.$

\begin{lemma}\emph{(\cite[Lemma~2.4]{Hung4}).}\label{isolated} If $S$ is a simple group of Lie type in characteristic $p$ with $S\neq {}^2F_4(2)',$
then the degree of the Steinberg character of $S$ with degree $|S|_p$ is an isolated degree of $S.$
\end{lemma}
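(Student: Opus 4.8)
The plan is to show that if $\chi\in\Irr(S)$ has degree $|S|_p$ (namely $\chi=St_S$), then no proper nontrivial degree of $S$ divides $|S|_p$, and no proper multiple of $|S|_p$ lies in $\cd(S)$. The second half is immediate: since $|S|_p$ is already the full $p$-part of $|S|$, any multiple $m|S|_p$ with $m>1$ that were a character degree would have $p$-part exceeding $|S|_p$, contradicting $\chi(1)\mid |S|$ for all $\chi\in\Irr(S)$; more carefully, if $p\mid m$ this is the contradiction, and if $p\nmid m$ one still needs $m|S|_p\in\cd(S)$ to be excluded, which follows because then $S$ would have a character of $p$-defect zero whose degree strictly exceeds $|S|_p$, impossible. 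So the real content is the first half: \emph{no proper nontrivial $d\in\cd(S)$ divides $|S|_p$}. Equivalently, every nontrivial character degree of $S$ other than $|S|_p$ itself has a prime divisor different from $p$.

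The main step is therefore a uniform statement about character degrees of simple groups of Lie type in characteristic $p$: the only $p$-power degree (equivalently, the only degree all of whose prime divisors equal $p$) is $1$ and $|S|_p$ itself. For this I would invoke the structure theory of $\Irr(S)$. By Lusztig's classification, $\chi(1)=|S:T|_{p'}\cdot\psi(1)$ roughly speaking: more precisely, for a semisimple element $s$ in the dual group, the degrees in the Lusztig series $\mathcal{E}(S,s)$ have the form $|G^*:C_{G^*}(s)|_{p'}\cdot(\text{unipotent degree of }C_{G^*}(s))$. The $p'$-part of $\chi(1)$ equals $|S|_{p'}/(\text{something})$ times a unipotent piece, and a classical fact (going back to the generic degree formula, cf. \cite[13.8, 13.9]{car85}) is that a unipotent character has degree a power of $p$ only when it is trivial or the Steinberg character; and the factor $|G^*:C_{G^*}(s)|_{p'}$ is a power of $p$ only when it equals $1$, i.e. $s$ is central. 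Combining: $\chi(1)$ is a $p$-power iff $\chi$ is trivial or $\chi=St_S$. An alternative, more elementary route for the classical groups specifically is to exhibit, as in the proof of Lemma~\ref{Lem1} and Table~\ref{Tab2}, explicit small-degree characters whose degrees are visibly divisible by primes other than $p$ (e.g.\ degrees of the form $(q^n\pm1)/(q\pm1)$, which by Zsigmondy, Lemma~\ref{Zsigmondy}, carry primitive prime divisors $\neq p$), and then argue that any $p$-power degree must be at least $|S|_p$ by comparing $p$-parts with $|S|$.

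The one genuine obstacle is handling the small cases and the exceptions cleanly: the excluded group ${}^2F_4(2)'$ is set aside by hypothesis, but one must still be careful with $L_2(q)$, $Sz(q^2)$, ${}^2G_2(q)$, and the smallest classical groups, where $\cd(S)$ is short and must be checked directly against \cite{atlas} or the explicit lists in \cite{Wake,HW1}; for instance $\cd(L_2(2^f))=\{1,2^f-1,2^f,2^f+1\}$, and here $|S|_p=2^f$ while $2^f-1$ and $2^f+1$ are coprime to $2$, so the claim holds. For the generic classical and exceptional groups one cites \cite{Lub} together with the generic unipotent degree formulas. Assembling these pieces — the reduction to ``the only $p$-power degree is $|S|_p$'', the Lusztig-series factorization, and the finite case-check — completes the proof; I would organize it exactly as in \cite[Lemma~2.4]{Hung4}, namely state the $p$-power-degree classification as the key input and then deduce both the divisibility and the no-proper-multiple conditions in two short lines as above.
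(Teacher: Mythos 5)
First, a point of comparison: the paper does not prove this lemma at all --- it is imported verbatim from \cite[Lemma~2.4]{Hung4} --- so there is no internal argument to measure your proposal against; it has to be judged on its own terms. Your reduction is the right one: isolation of $|S|_p$ amounts to (i) no nontrivial degree of $S$ other than $|S|_p$ is a power of $p$, and (ii) no degree of the form $m|S|_p$ with $m>1$ occurs. For (i), the Lusztig--Jordan factorization you sketch, or simply \cite[Theorem~1.1]{Malle} (which this paper invokes elsewhere for exactly this purpose), together with a check of the small cases, is adequate.

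The genuine gap is in (ii), in the case $p\nmid m$: you dismiss it with ``then $S$ would have a $p$-defect zero character of degree strictly exceeding $|S|_p$, impossible,'' but that impossibility is precisely the nontrivial point --- in general a group may have $p$-defect zero characters of several different degrees, so nothing formal rules this out, and your announced key input (``the only $p$-power degree is $1$ or $|S|_p$'') does not imply it either, since $m|S|_p$ with $p\nmid m>1$ is not a $p$-power. What is needed is the stronger defining-characteristic fact that $St_S$ is the \emph{unique} irreducible character of $S$ whose degree has $p$-part equal to $|S|_p$ (equivalently, the unique character of $p$-defect zero). This is a known theorem, and in fact the machinery you already deploy proves it: for $\chi\in\mathcal{E}(S,s)$ with $s$ noncentral semisimple, $\chi(1)=|G^*:C_{G^*}(s)|_{p'}\,\psi(1)$ with $\psi$ unipotent for $C_{G^*}(s)$, so $\chi(1)_p\leq |C_{G^*}(s)|_p<|G^*|_p=|S|_p$; for $s$ central the degrees are unipotent degrees, and by the generic degree formula of \cite[13.8, 13.9]{car85} the only unipotent character with full $p$-part is the Steinberg character. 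Replacing your key claim by ``the only degree whose $p$-part is $|S|_p$ is $|S|_p$ itself'' closes case (ii) at once (and subsumes the $p\mid m$ case); with that repair, and the small-group and $q=2$ caveats you already flag handled via \cite{Malle} or \cite{atlas}, your argument is sound and is in the spirit of the cited source.
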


If $G$ is a group, then we denote by $F(G)$ the Fitting subgroup of $G.$ By results of G. Michler
and W. Willems,  every simple group of Lie type has an irreducible character of $p$-defect zero for
any prime $p$ (see \cite{Willems}). Using this result, we obtain the following.

\begin{lemma}\label{Fitting} Let $H$ be a finite simple group of Lie type and $G$ be a finite
group. If $\cd(G)=\cd(H)$ and $|G|=|H|,$ then $F(G)$ is trivial.
\end{lemma}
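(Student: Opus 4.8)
The plan is to argue by contradiction: assume $F(G)\neq 1$ and derive a contradiction with $\cd(G)=\cd(H)$ and $|G|=|H|$. Since $F(G)$ is nilpotent and nontrivial, there is a prime $r$ with $O_r(G)\neq 1$. Let $N=O_r(G)$ and pick a nontrivial $\lambda\in\Irr(N)$. By Clifford theory, every degree in $\Irr(G\mid\lambda)$ is a multiple of $|G:I_G(\lambda)|$ and, more importantly, every $\chi\in\Irr(G\mid\lambda)$ satisfies $r\mid\chi(1)$ is \emph{not} forced in general, so the right tool is the theorem of Michler--Willems quoted before the lemma: $H$ has an irreducible character $\theta$ of $r$-defect zero, i.e. $\theta(1)_r=|H|_r=|G|_r$. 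Since $\cd(G)=\cd(H)$, there is $\chi\in\Irr(G)$ with $\chi(1)=\theta(1)$, so $\chi(1)_r=|G|_r$.

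The key step is then the standard fact that a character of $r$-defect zero cannot lie over a nontrivial character of a normal $r$-subgroup: if $\chi\in\Irr(G)$ has $\chi(1)_r=|G|_r$ and $N\unlhd G$ is an $r$-group, then $N\subseteq Z(G)$ and $N$ is in the kernel of $\chi$ more precisely $\chi_N=\chi(1)\mu$ for a \emph{linear} $\mu\in\Irr(N)$; combining this with the fact that $\chi(1)_r=|G|_r\geq |N|$ while $\chi(1)\mid |G/N|\cdot(\text{something coprime issues})$—the clean way is: by a result of broadly Ito-type, $\chi$ restricted to the abelian normal subgroup $N$ has all irreducible constituents of degree $1$ and they form an orbit under $G$; the stabilizer $I=I_G(\mu)$ then satisfies $|G:I|\mid\chi(1)$, and since $\chi(1)_r=|G|_r$ one checks $r\nmid|G:I|$ is impossible unless $N=I\cap N$ acts trivially. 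I would instead invoke the cleanest formulation directly: if $\chi\in\Irr(G)$ has $r$-defect zero then $\mathbf{O}_r(G)\le\ker\chi$; equivalently $r\nmid |\mathbf{O}_r(G)|$ forces $\mathbf{O}_r(G)=1$. Applying this with the character $\chi$ found above gives $O_r(G)\leq\ker\chi$, but this does not yet contradict $O_r(G)\neq 1$ on its own.

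To close the argument one runs over \emph{all} primes $r\in\pi(F(G))$. For each such $r$, the Michler--Willems theorem furnishes $\theta_r\in\Irr(H)$ of $r$-defect zero, hence by $\cd(G)=\cd(H)$ a character $\chi_r\in\Irr(G)$ of $r$-defect zero, hence $O_r(G)\leq\ker\chi_r$. This shows $O_r(G)$ is a proper normal subgroup contained in a kernel; but more usefully it shows $O_r(G)$ acts trivially in some representation of full $r$-defect, which combined with $\chi_r(1)_r=|G|_r$ and $\chi_r$ factoring through $G/O_r(G)$ gives $|G/O_r(G)|_r\geq\chi_r(1)_r=|G|_r$, forcing $|O_r(G)|_r=1$, i.e. $O_r(G)=1$. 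Since this holds for every prime $r$, we get $F(G)=\prod_r O_r(G)=1$, the desired contradiction.

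The main obstacle I anticipate is making the ``$r$-defect zero character kills $O_r(G)$'' step airtight: the precise statement is that if $\chi$ has $r$-defect zero then $O_r(G)\subseteq\ker\chi$, which follows because $\chi_{O_r(G)}=\chi(1)\mu$ with $\mu$ linear (as $\chi(1)_r=|G|_r$ forces, via Lemma \ref{Gallagher}(a)-type divisibility and the fact that an $r$-group has only $r$-power degrees, that the constituents on an abelian normal $r$-subgroup are linear and $G$-invariant after a central reduction), and then $\mu$ being $G$-invariant and $O_r(G)$ being generated by commutators forces $\mu=1_{O_r(G)}$. The rest is the bookkeeping over primes $r\in\pi(F(G))$ using $|G|=|H|$ so that $|G|_r=|H|_r$ matches the defect-zero condition on the $H$ side. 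I would present the kernel statement as a citation to a standard character-theory text (it is essentially a consequence of the fact that $r$-defect-zero characters vanish on $r$-singular elements) rather than reprove it.
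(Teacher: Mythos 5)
Your overall strategy is sound and shares the paper's key input: both proofs start from the Michler--Willems theorem, producing for the relevant prime $r$ an $r$-defect zero character of $H$, and transfer it to $G$ using $\cd(G)=\cd(H)$ together with $|G|=|H|$ to get $\psi\in\Irr(G)$ with $\psi(1)_r=|G|_r$. Where you diverge is in closing the argument. The paper takes a minimal normal \emph{abelian} subgroup $A$ (an elementary abelian $r$-group inside $F(G)$) and applies Ito's theorem \cite[Theorem~6.15]{Isaacs}: $\psi(1)$ must divide $|G:A|$, while $|G:A|_r<|G|_r=\psi(1)_r$, a contradiction; this is completely elementary and avoids any block-theoretic input. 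You instead work with $N=O_r(G)$ directly and invoke the statement that an $r$-defect zero character forces $O_r(G)\leq\ker\chi$, hence (by the factoring/divisibility step, which is correct) $O_r(G)=1$ for every $r$, so $F(G)=1$. That statement is true and citable, but the clean citable form is the block-theoretic fact that $O_r(G)$ is contained in every defect group, so a block of defect zero forces $O_r(G)=1$ outright; your route buys a shorter-looking deduction at the price of importing block theory, whereas the paper stays within Clifford/Ito-level character theory.

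One caution: your inline attempt to justify the kernel claim does not work as written. $O_r(G)$ need not be abelian, so its irreducible constituents under $\chi$ need not be linear; a $G$-invariant linear character of $O_r(G)$ is not forced to be trivial by any ``generated by commutators'' argument (it only kills $[O_r(G),G]$); and in fact, if one had a nontrivial normal $r$-subgroup $N$ and a defect zero character $\chi$, the vanishing of $\chi$ on $r$-singular elements would make $\chi_N$ a multiple of the \emph{regular} character of $N$ -- the opposite of $N\leq\ker\chi$ -- the true content being that no such $N$ can exist. So if you keep your route, replace the sketch by a genuine citation (defect groups contain $O_r(G)$), or by the short elementary argument combining Clifford's theorem with the vanishing of defect zero characters on $r$-singular elements; alternatively, pass to a minimal normal subgroup of $G$ inside $O_r(G)$ and use Ito's theorem exactly as the paper does.
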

\begin{proof}
Suppose that $\cd(G)=\cd(H)$ and $|G|=|H|.$ It suffices to show that $G$ has no minimal normal
abelian subgroups. By way of contradiction, assume that $A$ is a nontrivial minimal normal abelian
subgroup of $G.$ It follows that $A$ is an elementary abelian $r$-group for some prime $r.$ By
\cite[Theorem]{Willems}, $H$ possesses an irreducible character $\chi$ of $r$-defect zero, that is $\chi(1)_r=|H|_r$. As $\cd(G)=\cd(H),$ we deduce that $G$ has an irreducible character $\psi\in\Irr(G)$ such that
$\psi(1)=\chi(1).$  Furthermore, as $|G|=|H|,$ we obtain that $\psi(1)_r=\chi(1)_r=|H|_r=|G|_r.$ We now have that
$|G:A|_r=|G|_r/|A|<|G|_r=\psi(1)_r$ so that $\psi(1)$ cannot divide $|G:A|,$ which contradicts Ito's Theorem \cite[Theorem~6.15]{Isaacs}.
\end{proof}

The next two lemmas will be used to obtain the final contradiction in the proof of the main theorem. These results are taken from \cite{Bia}.

\begin{lemma}\label{charext} 
If $S$ is
a nonabelian simple group, then there exists a nontrivial irreducible character $\theta$ of $S$
that extends to ${\rm{Aut}}(S).$
\end{lemma}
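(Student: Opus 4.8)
The plan is to invoke the classification of finite simple groups and to exhibit, for each family, an explicit nontrivial irreducible character that is known to extend to the full automorphism group, treating separately the alternating groups, the $26$ sporadic simple groups together with the Tits group, and the simple groups of Lie type.

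For a simple group $S$ of Lie type in characteristic $p$ with $S\neq {}^2F_4(2)'$, the candidate is the Steinberg character $St_S$ of degree $|S|_p$. By Lemma~\ref{isolated}, $St_S$ is the unique irreducible character of $S$ of its degree, so it is automatically fixed by $\Aut(S)$; what one actually needs is that $St_S$ \emph{extends} to $\Aut(S)$. It extends over the inner-diagonal automorphisms because $St_S$ is the restriction of the Steinberg character of the ambient group of the same type with connected centre, and it is then fixed by, and extends over, the field and graph automorphisms. I would record this as a single citable statement, appealing to the known results on extendibility of the Steinberg (and, more generally, unipotent) characters; the references collected in \cite{Bia} are enough for this.

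For the alternating groups $A_n$ with $n\geq 5$: if $n\neq 6$ then $\Aut(A_n)=S_n$, and the irreducible character of $A_n$ of degree $n-1$ — the nontrivial constituent of the natural permutation character of $S_n$, restricted to $A_n$, which remains irreducible for $n\geq 5$ — is by construction the restriction of a character of $S_n$ and hence extends. For $n=6$ one uses the exceptional isomorphism $A_6\cong L_2(9)$ together with the Lie type paragraph above (the Steinberg character of $L_2(9)$, of degree $9$), or one simply reads off from \cite{atlas} a nontrivial irreducible character of $A_6$ extending to $\Aut(A_6)=A_6.2^2$.

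For the $26$ sporadic simple groups and the Tits group ${}^2F_4(2)'$, the outer automorphism group has order $1$ or $2$. When it is trivial, every nontrivial irreducible character of $S$ does the job; when it has order $2$, the fusion and extension information recorded in \cite{atlas} exhibits in each individual case a nontrivial irreducible character of $S$ that is $\Aut(S)$-invariant and extends to $S.2=\Aut(S)$. I expect the main difficulty to lie not in this finite case analysis but in the Lie type case: one must be careful to pin down that $St_S$ genuinely extends all the way through the field and graph automorphisms, rather than merely being stabilised by them.
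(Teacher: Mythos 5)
Your proposal is correct and takes essentially the same route as the paper: the paper's entire proof is the citation \cite[Theorems 2, 3, 4]{Bia}, and those theorems are proved by exactly your case analysis (the Steinberg character for groups of Lie type via the known extension theorem, the degree $n-1$ character restricted from $S_n$ for the alternating groups, and Atlas data for the sporadic groups and the Tits group). One small caveat: Lemma \ref{isolated} only asserts that $St_S(1)$ is an isolated degree, not that $St_S$ is the unique character of that degree, so your parenthetical deduction of $\Aut(S)$-invariance from it is not justified as stated -- but this remark is not load-bearing, since the extendibility you correctly attribute to the cited results already subsumes invariance.
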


\begin{proof} This is \cite[Theorems $2,3,4$]{Bia}.
\end{proof}

\begin{lemma}\emph{(\cite[Lemma $5$]{Bia}).}\label{extend} Let $N$ be
a minimal normal subgroup of $G$ so that $N\cong S^k,$ where $S$ is a nonabelian simple group. If
$\theta\in {\rm{Irr}}(S)$ extends to ${\rm{Aut}}(S),$ then $\theta^k\in {\rm{Irr}}(N)$ extends to
$G.$
\end{lemma}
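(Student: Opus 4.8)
The plan is to build the extension explicitly out of an extension of $\theta$ to $\Aut(S)$ together with the classical tensor-power representation of a wreath product. Fix an isomorphism $N\cong S^k$; since $S$ is nonabelian simple, $\Aut(N)\cong\Aut(S)\wr S_k=\Aut(S)^k\rtimes S_k$, and under this identification $\Inn(N)$ is the subgroup $\Inn(S)^k$ sitting inside the base group $\Aut(S)^k$. Let $\phi\in\Irr(\Aut(S))$ be an extension of $\theta$, afforded by a representation $\rho$ on a space $V$, so that $\dim V=\theta(1)$. The first step is to show that the external tensor power $\phi^k=\phi\times\cdots\times\phi\in\Irr(\Aut(S)^k)$ extends to $\Aut(S)\wr S_k$: let $\Aut(S)^k$ act on $V^{\otimes k}$ by $\rho\otimes\cdots\otimes\rho$ and let $S_k$ act on $V^{\otimes k}$ by permuting the tensor factors; a routine check shows that these two actions combine into a representation $\mathcal{R}$ of $\Aut(S)\wr S_k$ on $V^{\otimes k}$. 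Since $\mathcal{R}$ restricted to the base group affords the irreducible character $\phi^k$, the representation $\mathcal{R}$ is itself irreducible, so its character $\widehat{\phi}\in\Irr(\Aut(S)\wr S_k)$ restricts to $\phi^k$ on $\Aut(S)^k$; restricting one further step to $\Inn(N)=\Inn(S)^k$ and using $\phi|_S=\theta$ shows that $\widehat{\phi}$ restricts to $\theta^k$ on $\Inn(N)$, once $\Inn(N)$ is identified with $N$ in the usual way.

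The second step transports this into $G$. The conjugation action of $G$ on $N$ gives a homomorphism $\eta\colon G\to\Aut(N)=\Aut(S)\wr S_k$ with kernel $C_G(N)$, and since $Z(N)=1$ the map $\eta$ sends $N$ isomorphically onto $\Inn(N)$. Let $\beta=\widehat{\phi}\circ\eta$, that is, the character of $G$ obtained by pulling $\widehat{\phi}|_{\eta(G)}$ back along the surjection $G\twoheadrightarrow\eta(G)$. Then the restriction of $\beta$ to $N$ equals the restriction of $\widehat{\phi}$ to $\eta(N)=\Inn(N)$, which is $\theta^k$. Because $\theta^k\in\Irr(N)$, the a priori possibly reducible character $\beta$ restricts irreducibly to $N$, and this forces $\beta\in\Irr(G)$. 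Hence $\beta$ is an extension of $\theta^k$ to $G$, as required.

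There is no deep obstacle here; the points needing care are the identification $\Aut(S^k)\cong\Aut(S)\wr S_k$ that places $\Inn(N)$ inside the base group (so that $\widehat{\phi}$ genuinely restricts to $\theta^k$ there), and the closing observation that although $\widehat{\phi}$ may fail to restrict irreducibly to the possibly proper subgroup $\eta(G)\le\Aut(N)$, its pullback $\beta$ is nevertheless irreducible since its restriction to $N$ is the irreducible character $\theta^k$. The most computational ingredient is verifying that the two actions on $V^{\otimes k}$ are compatible, i.e.\ that $\mathcal{R}$ really is a homomorphism on $\Aut(S)\wr S_k$, but this is entirely standard.
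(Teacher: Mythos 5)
Your proof is correct. The paper does not prove this lemma at all --- it simply cites \cite[Lemma~5]{Bia} --- and your argument (extend $\theta^{k}$ to $\Aut(S)\wr S_{k}$ via the permutation action on the tensor power $V^{\otimes k}$, then pull back along $G\to\Aut(N)$ and use that a character with irreducible restriction to $N$ is itself irreducible) is essentially the standard proof underlying that citation, so there is nothing to fault.
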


Finally, for each simple classical group of Lie type $S,$ we list an upper bound for $b(S)$ in
Table \ref{Tab1}. These upper bounds were obtained in \cite[Theorem $2.1$]{Seitz}.

\section{Non-containment of character degree sets of simple groups}\label{sec2}
We assume the following set up. Let $q=p^a$ be a power of a prime $p$ and let $H$ be one of the
following simple classical groups:

\begin{equation*}\label{classical}
    L_n(q),~ U_n(q),~ O_{2n}^\epsilon(q)(n\geq
4),~ S_{2n}(q), O_{2n+1}(q)(n\geq 3).
\end{equation*}
Define
\[\mathcal{C}=\{L_4^{\epsilon}(2),L_4^{\epsilon}(3),L^\epsilon_5(2),S_6(2),S_6(3),
O_8^\pm(2),O_8^\pm(3),O_7(3)\}.\] 
The main purpose of this section is to prove the following result.

\begin{proposition}\label{Reduction} Let $G$ be a perfect group and let $M$ be a maximal normal subgroup of
$G$ so that $G/M$ is a nonabelian simple group. Assume that $\cd(G)=\cd(H)$ and $|G|=|H|.$ Then
$G/M$ is a simple group of Lie type in characteristic $p.$

\end{proposition}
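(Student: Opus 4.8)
The plan is to exploit the two hypotheses $\cd(G)=\cd(H)$ and $|G|=|H|$ together with the classification of finite simple groups applied to $G/M$. By Lemma~\ref{Fitting}, $F(G)$ is trivial, which since $G$ is perfect forces $M$ to be (essentially) a direct product of nonabelian simple groups; in any case $G/M$ is a nonabelian simple group by hypothesis, so the task is to rule out the possibilities that $G/M$ is an alternating group, a sporadic group, the Tits group, or a simple exceptional group of Lie type. I would treat these cases by producing arithmetic obstructions: every character degree of $G/M$ lies in $\cd(G)=\cd(H)$, and $|G/M|$ divides $|G|=|H|$, so $\pi(G/M)\subseteq\pi(H)$ and the character degrees of $G/M$ are constrained both in their prime content and in their size relative to $b(H)$, for which we have the explicit upper bound from Table~\ref{Tab1}.

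First I would dispose of the alternating groups $A_n$. For large $n$, Lemma~\ref{Lem3} gives $b(A_n)\ge 2^{n-1}$, while $b(H)$ is bounded above by a fixed polynomial-type expression in $q$ and $n_H$ (the rank) from Table~\ref{Tab1}; comparing $|A_n|=n!/2$ with $|H|$ and using that $n!/2\mid |H|$ pins $n$ to a bounded range in terms of $q$ and $\mathrm{rk}(H)$, and then $b(A_n)\ge 2^{n-1}$ against the Seitz bound on $b(H)$ forces a contradiction except for finitely many small cases, which are handled directly from \cite{atlas} (these are exactly the groups collected in $\mathcal{C}$, plus a few more). Second, for the sporadic groups and the Tits group there are only finitely many, each with known order and known $\cd$; since $|G/M|$ must divide $|H|$ and $\cd(G/M)\subseteq\cd(H)$, one checks case by case that no sporadic simple group $T$ has $|T|$ dividing the order of any of our classical $H$ with the degree-set containment holding — in practice a Zsigmondy-prime argument (Lemma~\ref{Zsigmondy}) comparing primitive prime divisors in $|T|$ and $|H|$ eliminates most, and the residue is finite arithmetic.

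Third, and this is where the real work lies, I would rule out $G/M$ being a simple exceptional group of Lie type $X(q')$. Here the cleanest route is again through the Steinberg character: by Lemma~\ref{isolated}, $\mathrm{St}_{G/M}(1)=|G/M|_{p'}$ (the Sylow-$p'$... rather $p'$-part; precisely $|G/M|_{r}$ where $r$ is the defining characteristic of $G/M$) is an isolated degree of $G/M$, hence an isolated degree-divisor inside $\cd(H)$, and Lemma~\ref{isolated}/Lemma~\ref{Lem1} say $H$ has its own isolated Steinberg degree $|H|_p$. Matching up the defining characteristics: either $G/M$ and $H$ share the defining characteristic $p$ — in which case $|G/M|_p$ divides $|H|_p=q^N$ for the appropriate $N$, bounding the field size and rank of $G/M$, after which Table~\ref{Tab1}, the list of exceptional-type degrees from \cite{Lub}, and order comparison finish it — or the characteristics differ, and then the $r$-defect-zero character of $H$ (degree divisible by $|H|_r$) forces $|H|_r$ to appear as a character degree, which is incompatible with the known degree set of the exceptional group $G/M$ because that degree would have to divide $|G/M|$, again contradicting $|G/M|\mid |H|$ via a Zsigmondy prime. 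The main obstacle is organizing this last step uniformly: the exceptional groups come with enough degree data in \cite{Lub} that each family can be knocked out, but one must be careful about small $q'$ (where Zsigmondy primes can fail, by the exceptions in Lemma~\ref{Zsigmondy} requiring $q\ge 3$, $n\ge 3$) and about coincidences of orders among groups of Lie type in different characteristics — these exceptional coincidences are classical and finite in number, so they reduce to a bounded check against \cite{atlas} and the tables. Once all non-Lie-type and all exceptional-Lie-type possibilities are excluded, $G/M$ must be a simple classical group, and the characteristic of $G/M$ is forced to be $p$ by the same Steinberg/isolated-degree comparison, completing the proof.
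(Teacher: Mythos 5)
There is a genuine gap, and it sits exactly where the paper's real work lies: the case in which $G/M$ is a simple group of Lie type in a characteristic $r\neq p$ (classical or exceptional). Your sketch for this case runs the degree containment in the wrong direction. From $\cd(G/M)\subseteq\cd(G)=\cd(H)$ one only knows that the Steinberg degree $|G/M|_r$ of $G/M$ lies in $\cd(H)$; nothing forces $|H|_r$, or any degree of $H$, to be a degree of $G/M$ (an $r$-defect zero character of $H$ merely has degree whose $r$-part is $|H|_r$), and the asserted contradiction ``that degree would have to divide $|G/M|$, contradicting $|G/M|\mid |H|$'' is vacuous, since $|H|_r$ divides $|H|$ in any case. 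Likewise, isolation of $St_{G/M}(1)$ inside $\cd(G/M)$ says nothing about its position inside $\cd(H)$. The paper's actual argument here is that $\cd(H)$ would contain two distinct nontrivial prime-power degrees, namely $|H|_p$ and $|G/M|_r$, so \cite[Theorem~1.1]{Malle} pins down $H$ and the non-Steinberg prime-power degree to a short explicit list; the minimal-degree bounds of \cite{TZ} then give $d_2(G/M)\geq d_2(H)\geq |G/M|_r$ (or the analogous $d_1$ inequality), Lemma~\ref{Lem1} forces $G/M\cong L_2(2^f)$, and Lemma~\ref{sols} rules out the resulting equations $2^f=(q^n-1)/(q-1)$, $(q^n+1)/(q+1)$, $(q^n+1)/2$. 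None of this machinery appears in your proposal, so the cross-characteristic case is left open.

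Two further points. First, you aim at a conclusion stronger than the statement: the proposition only claims that $G/M$ is of Lie type in characteristic $p$, so exceptional groups in characteristic $p$ need not (and at this stage are not) excluded; your claim that ``order comparison finishes it'' for same-characteristic exceptional $G/M$ is both unnecessary and unsubstantiated. Second, for the sporadic and Tits groups your plan is a bare finite check whose bounding mechanism is misstated: Zsigmondy primes of $|H|$ that do not divide $|G/M|$ are irrelevant, because the needed containment is $\pi(G/M)\subseteq\pi(H)$, not the reverse; what actually bounds $H$ is an inequality like $d_1(H)\leq d_1(G/M)$. The paper avoids this large unstructured check by applying Clifford theory to a character of $G$ of degree $|H|_p$: either it is induced from a subgroup of prime-power index (Guralnick) or it yields an ordinary or projective character of $G/M$ of nontrivial $p$-power degree (again \cite[Theorem~1.1]{Malle}), which cuts the sporadic possibilities to a handful that are then eliminated via $d_1$ and $\pi$ comparisons and Lemma~\ref{isolated}. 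Finally, a small slip at the start: $F(G)=1$ does not make $M$ (essentially) a direct product of nonabelian simple groups, though nothing in your argument actually uses this.
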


\begin{proof} As $G/M$ is a nonabelian simple group, using the classification of finite simple
groups, we will eliminate other possibilities for $G/M$ and hence the result will follow. We remark
that as $\cd(G/M)\subseteq \cd(H),$ we obtain that $d_i(G/M)\geq d_i(H)$ for all $i,$ and also
$b(G/M)\leq b(H).$ Moreover as $|G|=|H|$ and $|G|=|G/M|\cdot |M|,$ we deduce that $|G/M|$ divides
$|H|$ and hence $\pi(G/M)\subseteq \pi(H).$

As $\cd(G)=\cd(H)$ and $H$ is a simple classical group of Lie type in characteristic $p,$  there
exists $\chi\in\Irr(G)$ such that $\chi(1)=|H|_p=|G|_p$ as $|G|=|H|.$ Let $\theta\in\Irr(M)$ be an
irreducible constituent of $\chi$ when restricted to $M,$ and let $I=I_G(\theta).$

{\bf Case 1.} $G/M$ is a sporadic simple group or the Tits group.

{\bf Subcase} $\theta$ is not $G$-invariant. Then $M\unlhd I\lneq G.$ As $\chi\in\Irr(G|\theta),$
we deduce from \cite[Theorem~6.11]{Isaacs} that $\chi=\phi^G$ for some $\phi\in\Irr(I|\theta).$ We have
that $\chi(1)=|G:I|\phi(1)=|H|_p.$ Hence $I/M$ is a proper subgroup of $G/M$ whose index is a prime
power. By \cite[Theorem~1]{Guralnick}, one of the following cases holds.

$(a)$ $G/M\cong M_{11},$ $I/M\cong M_{10}\cong A_6\cdot 2_3$ and $|G:I|=11;$

$(b)$ $G/M\cong M_{23},$ $I/M\cong M_{22}$ and $|G:I|=23.$

In both cases, we have that $|G:I|=p$ is prime and $(p,|I/M|)=1.$ As
$\chi(1)=p\phi(1)$ is a $p$-power, we deduce that $\phi(1)$ is also
a $p$-power so that $(\phi(1),|I/M|)=1.$ By Lemma \ref{Gallagher}(a),
$\phi$ is an extension of $\theta$ to $I.$ As $I/M$ is nonabelian,
there exists $\psi\in\Irr(I/M)$ such that $\psi(1)>1.$ By Lemma \ref{Clifford}(b), we obtain that
$(\phi\psi)^G(1)=|G:I|\phi(1)\psi(1)=\chi(1)\psi(1)\in\cd(G),$ which
contradicts Lemma \ref{isolated}.

{\bf Subcase} $\theta$ is $G$-invariant. Then $\chi_M=e\theta,$ where $e\geq 1$ and
$\chi(1)=|H|_p.$

Assume first that $\theta$ extends to $\theta_0\in\Irr(G).$ By
Gallagher's Theorem, we obtain that $\mu\theta_0,$ where  $\mu\in\Irr(G/M),$ are all the irreducible constituents of $\theta^G.$ Hence
$\chi=\tau\theta_0$ for some $\tau\in\Irr(G/M).$  Assume that
$\tau(1)=1.$ Since $G/M$ is nonabelian, it has an irreducible
character $\psi$ with $\psi(1)>1.$ Then
$\psi(1)\theta_0(1)=\psi(1)\tau(1)\theta_0(1)=\psi(1)\chi(1)\in\cd(G),$
contradicting Lemma \ref{isolated}. Hence $\tau(1)>1$ and so it is a
nontrivial $p$-power degree of $G/M.$ By \cite[Theorem~1.1]{Malle},
one of the following cases holds.

\begin{itemize}
\item[(i)] $G/M\in\{M_{11},M_{12}\}$ and $\tau(1)=11$ or $2^4;$

\item[(ii)] $G/M\in\{M_{24},Co_{2},Co_3\}$ and $\tau(1)=23;$

\item[(iii)] $G/M={}^2F_4(2)'$ and $\tau(1)=3^3$ or $2^{11}.$
\end{itemize}

Assume that $(G/M,\tau(1))\not\in\{
(M_{11},2^4),({}^2F_4(2)',2^{11})\}.$ By \cite{atlas},  there exists
$\psi\in\Irr(G/M)$ with $\psi(1)=m\tau(1)$ for some $m>1.$ We have
$\psi(1)\theta_0(1)=m\tau(1)\theta_0(1)=m\chi(1)$ is a degree of
$G,$ which contradicts Lemma \ref{isolated}.

Assume that $(G/M,\tau(1))\in\{
(M_{11},2^4),({}^2F_4(2)',2^{11})\}.$ Then $p=2$ and so $H$ is a
classical group in characteristic $2.$ By $(i)$ and $(iii)$ above
and the fact that $\cd(G/M)\subseteq \cd(H),$  we deduce that $H$
has two distinct nontrivial prime power degrees. By applying
\cite[Theorem~1.1]{Malle}, we obtain that $\tau(1)=|H|_2$ and
$H\cong L_n(2^a)$ or $U_n(2^a)$ with $n\geq 5$ an odd prime. We now
have that $n(n-1)a/2=4$ or $11,$ according to whether $G/M\cong
M_{11}$ or ${}^2F_4(2)',$ respectively. It follows that $n(n-1)a=8$
or $22.$ However both cases are impossible as $n=5$ or $n\geq 7.$

Thus  $\theta$ is not extendible to $G.$  Since $\chi(1)=e\theta(1)=|H|_p,$ we deduce that $e$ is a nontrivial $p$-power
and it is also a degree of a proper projective irreducible representation of $G/M.$ By
\cite[Theorem~1.1]{Malle}, we obtain that $(G/M,e)=(M_{12},2^5),(J_2,2^6)$ or $(Ru,2^{13}).$
Observe that the first case cannot occur as $M_{12}$ possesses a character of degree $2^4<2^5\leq
\chi(1)=|H|_2,$ which contradicts Lemma \ref{isolated}. Hence $G/M\cong J_2$ or $Ru$ and $H$ is of
characteristic $2.$

As the character tables of the sporadic simple groups $G/M$ under consideration and the simple classical groups $H$ in $\mathcal{C}$ are available in  \cite{atlas}, it is routine to check that  $\cd(G/M)\nsubseteq \cd(H)$ and so $\cd(G)\neq \cd(H)$ in these cases.
Therefore we can assume
that $H\not\in\mathcal{C}.$  We consider the following cases.

$(a)$ $H=L^\epsilon_n(q),q=2^a,n\geq 4.$ As $H\not\in\mathcal{C},$ it follows from
\cite[Table~II]{TZ} that $d_1(H)\geq (q^n-q)/(q+1).$ As $H\not\in\mathcal{C},$ we deduce that
$d_1(H)>14=d_1(J_2)$ so that $G/M$ cannot be isomorphic to $J_2.$ Now assume that $G/M\cong Ru.$
Then $d_1(Ru)=378.$ We have $d_1(H)>d_1(G/M)$ unless $H=L_n^\epsilon(2) (6\leq n\leq 10),$
$L_4^\epsilon(4)$ or $L_5^\epsilon(4).$ However we can check that $\pi(Ru)\nsubseteq \pi(H)$ in any
of these cases.

$(b)$ $H=S_{2n}(q),q=2^a,n\geq 3.$ It follows from \cite[Table~II]{TZ} that $d_1(H)\geq
(q^n-1)(q^n-q)/(2(q+1)).$ As $d_1(H)>14=d_1(J_2),$ we deduce that $G/M\cong Ru.$ We have
$d_1(H)>d_1(G/M)=378$ unless $H=S_{2n}(2)$ $(4\leq n\leq 5)$ or $S_6(4).$ But then
$\pi(Ru)\nsubseteq \pi(H)$ in any of these cases.

$(c)$ $H=O^-_{2n}(q),q=2^a,n\geq 4.$ As $H\not\in\mathcal{C},$ it follows from \cite[Table~II]{TZ}
that $d_1(H)\geq (q^n+1)(q^{n-1}-q)/(q^2-1).$ As $d_1(H)>14$ we deduce that $G/M\cong Ru.$  We have $d_1(H)>d_1(G/M)$ unless $H=O_8^-(2)$ or $O_{10}^-(2).$ However
$\pi(Ru)\nsubseteq \pi(H)$ in any of these cases.

$(d)$ $H=O^+_{2n}(2),n\geq 5.$ As $H\not\in\mathcal{C},$ it follows from \cite[Table~II]{TZ} that
$d_1(H)\geq (2^n-1)(2^{n-1}-1)/3.$ As $d_1(H)>14,$  we deduce that  $G/M\cong Ru.$  We have $d_1(H)>d_1(G/M)$ unless $H=O_{10}^+(2).$ But then
$\pi(Ru)\nsubseteq \pi(H)$ in this case.

$(e)$ $H=O^+_{2n}(q),q=2^a>2,n\geq 4.$ As $H\not\in\mathcal{C},$ it
follows from \cite[Table~II]{TZ} that $d_1(H)\geq
(q^n-1)(q^{n-1}+q)/(q^2-1)>d_1(Ru)>d_1(J_2),$  a
contradiction.

Thus $G/M$ cannot be a sporadic simple group nor the Tits group.

{\bf Case 2.} $G/M$ is an alternating group. Assume that $G/M\cong
A_m,$ where $m\geq 5.$ As $A_8\cong L_4(2),$ we consider $A_8$ as a
simple group of Lie type in characteristic $2.$

Assume first that $5\leq m\neq 8\leq 10.$ Using \cite{atlas}, we can
see that $\cd(A_m)\nsubseteq \cd(H)$ for any $H\in \mathcal{C}.$
Hence we assume that $H\not\in\mathcal{C}.$ Using \cite[Table
$\textrm{II}$]{TZ}, we observe that $d_1(H)\geq 10$ so that
$d_1(A_m)\leq 9<d_1(H)$ for $5\leq m\neq 8\leq 10,$ which is
impossible as $\cd(A_m)\subseteq \cd(H).$ 

Thus we can assume $m\geq
11.$ Hence $\{7,11\}\subseteq \pi(A_m).$ If
$H\in\mathcal{C},$ then $\{7,11\}\not\subseteq\pi(H)$ so that
$\pi(A_m)\nsubseteq \pi(H),$ which is a contradiction. We now assume
that $H\not\in\mathcal{C}.$ We outline our general argument here. As
$m\geq 11,$ we obtain that $d_1(A_m)=m-1.$ As
$\cd(G/M)=\cd(A_m)\subseteq\cd(H),$ we deduce that $d_1(A_m)=m-1\geq
d_1(H).$ By Lemma \ref{Lem3} we have that $b(A_m)\geq 2^{m-1}$ so
that $b(A_m)\geq 2^{d_1(H)}.$ Moreover as $b(A_m)\leq b(G)=b(H),$ we
deduce that $b(A_m)\leq q^{N(H)+1},$ where $|H|_p=q^{N(H)}$ by Table
\ref{Tab1}. Therefore we obtain $q^{N(H)+1}\geq 2^{d_1(H)}.$ However
for each possibility of $H,$ we can check that the latter inequality
cannot happen. For example, assume $H=L_n(q),q=p^a,n\geq 4.$ As
$H\not\in\mathcal{C},$ it follows from \cite[Table~II]{TZ} that
$d_1(H)\geq (q^n-q)/(q-1),$ and hence $q^{n(n-1)/2+1}\geq
2^{(q^n-q)/(q-1)}.$ It is routine to check that for any pairs
$(n,q)\not\in\{(4,2),(4,3),(5,2)\},$ the latter inequality cannot
happen.

{\bf Case 3.} $G/M\cong S$ is a simple group of Lie type in
characteristic $r\neq p.$ As $\textrm{cd}(G/M)\subseteq
\textrm{cd}(H),$ we deduce that $H$ possesses two distinct
irreducible characters $\chi_i,i=1,2$ with $\chi_1(1)=St_S(1)=|S|_r$
and $\chi_2(1)=St_H(1)=|H|_p.$ It follows from \cite[Theorem
$1.1$]{Malle} that one of the following cases holds.

$(a)$ $H=L_n(q),q\geq 3,n$ is odd prime, $(n,q-1)=1$ and $\chi_1(1)=(q^n-1)/(q-1).$ Then
$|S|_r=(q^n-1)/(q-1)$ is a prime power and $n$ is an odd prime and thus $n\geq 5.$ By \cite[Table
$\textrm{IV}$]{TZ}, we have $d_2(H)\geq (q^n-1)/(q-1)$ so that $d_2(H)\geq |S|_r$ and hence
 $d_2(S)\geq d_2(H)\geq |S|_r.$ By Lemma \ref{Lem1}, we have
that $S=L_2(2^f)$ for some $f\geq 2.$ Hence $2^f=(q^n-1)/(q-1),$ where $n\geq 5.$ It follows that
$q$ is odd, $n\geq 5$ and so the latter equation cannot happen by Lemma \ref{sols}.

$(b)$ $H=U_n(q),n$ is odd prime, $(n,q+1)=1$ and $\chi_1(1)=(q^n+1)/(q+1).$ As $n\geq 4$ is an odd
prime, we deduce that $n\geq 5$ is an odd prime. By \cite[Table $\textrm{V}$]{TZ}, we have
$|S|_r=\chi_1(1)\leq d_2(H)$ and thus as $d_2(H)\leq d_2(S)$ we obtain $d_2(S)\geq |S|_r$ so that
by Lemma \ref{Lem1}, $S=L_2(2^f)$ with $f\geq 2,$ and thus $2^f=(q^n+1)/(q+1).$ Hence $q$ is odd,
$n\geq 5$ and $2^f=(q^n+1)/(q+1),$ which contradicts Lemma \ref{sols}.

 $(c)$ $H=S_{2n}(q),q=p^a,p$ is odd prime, $an$ is  $2$-power and $\chi_1(1)=(q^n+1)/2.$
 By \cite[Theorem $5.2$]{TZ} we see that
$|S|_r=\chi_1(1)\leq d_2(H)$ and thus as $d_2(H)\leq d_2(S)$ we obtain that $|S|_r\leq d_2(S),$ so
that by Lemma \ref{Lem1}, $S=L_2(2^f)$ with $f\geq 2,$ and then $2^f=(q^n+1)/2,$ where $n\geq 3$
and $q$ is odd, contradicting Lemma \ref{sols}.

$(d)$ $H=S_{2n}(3),n\geq 3$ is a prime and $\chi_1(1)=(3^n-1)/2.$ By \cite[Theorem $5.2$]{TZ} we
see that $|S|_r=\chi_1(1)\leq d_1(H)$ and thus as $d_1(H)\leq d_1(S)$ we obtain $d_1(S)\geq |S|_r,$
which is impossible by Lemma \ref{Lem1}.

$(e)$ $H=S_6(2)$ and $\chi_1(1)\in\{7,3^3\}.$  Assume that $\chi_1(1)=|S|_r=7.$ Then $S$ must be a
simple group of Lie type of Lie rank $1$ in characteristic $7.$ The only possibility for $S$ is
$L_2(7).$ However $\cd(L_2(7))\nsubseteq \cd(S_6(2)).$ Finally, if  $\chi_1(1)=|S|_r=3^3,$ then $S$
is a simple group of Lie type of Lie rank at most $2$ in characteristic $3.$ Moreover  $|S|$
divides $|S_6(2)|=2^9\cdot 3^4\cdot 5\cdot 7.$ The only possibility for $S$ is $U_3(3).$ However
$\cd(U_3(3))\nsubseteq \cd(S_6(2)).$ 

Thus $G/M$ is a simple group of Lie type in characteristic
$p.$ The proof is now complete.
\end{proof}

\section{Proof of Theorem \ref{main}}\label{sec3}
In view of \cite{Hung1,Hung3}, we can assume that $H$ is a simple classical group of Lie type in
characteristic $p.$ Moreover by \cite{Hupp,Hupp06,Wake,Wake2} we can assume that $H$ is one of the
following simple groups: $L_n^\epsilon(q),O_{2n}^\epsilon(q),n\geq 4,S_{2n}(q),O_{2n+1}(q),n\geq
3,$ where $q$ is a power of $p.$ Assume that $G$ is a group with $\textrm{X}_1(G)=\textrm{X}_1(H).$ As $H$ is simple, it has
exactly one linear character which is the trivial character so that $G$ also possesses a unique
linear character and hence $G$ must be perfect. Now let $M$ be a maximal normal subgroup of $G.$ It
follows that $G/M$  is a nonabelian simple group. As $\textrm{X}_1(G)=\textrm{X}_1(H),$ we deduce
that $|G|=|H|$ and $\textrm{cd}(G)=\textrm{cd}(H).$ By Proposition \ref{Reduction}, we obtain that
$G/M$ is a simple group of Lie type in characteristic $p.$

Assume first that $M=1.$ Then $G$ is a simple group of Lie type in the same characteristic $p$ as
that of $H,$ $\cd(G)=\cd(H)$ and $|G|=|H|.$ By Artin's Theorem \cite[Theorem~5.1]{Kimmerle}, we
have that $\{G,H\}=\{S_{2n}(q),O_{2n+1}(q)\},n\geq 3,q$ odd or $\{G,H\}=\{L_{4}(2),L_{3}(4)\}.$
Using \cite{atlas}, we can check that $\cd(L_4(2))\neq \cd(L_3(4)).$ Hence it suffices to show that
$\cd(S_{2n}(q))\neq \cd(O_{2n+1}(q)),$ where $n\geq 3$ and $q$ is odd. We will show that
$\textrm{\textrm{cd}}(S_{2n}(q))\nsubseteq \textrm{\textrm{cd}}(O_{2n+1}(q)).$ Now it is well-known
that the symplectic group $Sp_{2n}(q)$ has four distinct irreducible Weil characters denoted by
$\eta_n,\eta_n^*$ of degree $(q^n-1)/2,$ and $\xi_n,\xi_n^*$ of degree $(q^n+1)/2$ (see
\cite{TZ1}). Let $\textbf{j}$ be the unique central involution in $Sp_{2n}(q).$ Then
$S_{2n}(q)\cong Sp_{2n}(q)/\langle \textbf{j}\rangle.$ Moreover let $\delta=(-1)^{(q-1)/2}.$ By
\cite[Lemma $2.6(i)$]{TZ1} we have that if $\chi\in \{\xi_n,\xi_n^*\},$ then
$\chi(\textbf{j})=\delta^n\chi(1).$ If $\chi\in \{\eta_n,\eta_n^*\},$ then
$\chi(\textbf{j})=-\delta^n\chi(1).$ Remark that if $\chi\in \textrm{Irr}(Sp_{2n}(q))$ then
$\chi\in \textrm{Irr}(S_{2n}(q))$ if and only if $\chi(\textbf{j})=\chi(1). $ It follows that
$(q^n+1)/2\in \textrm{\textrm{cd}}(S_{2n}(q))$ if  $n(q-1)/2$ is even; and  $(q^n-1)/2\in
\textrm{\textrm{cd}}(S_{2n}(q))$ if $n(q-1)/2$ is odd. Thus either $(q^n-1)/2\in
\textrm{\textrm{cd}}(S_{2n}(q))$ or $(q^n+1)/2\in \textrm{\textrm{cd}}(S_{2n}(q)).$ Using
\cite{atlas}, we can see that $\cd(S_6(3))\nsubseteq \cd(O_7(3))$ and hence we can assume that $(n,q)\neq (3,3).$ By using \cite[Theorem $6.1$]{TZ}, we have that
$d_1(O_{2n+1}(q))>(q^n+1)/2>(q^n-1)/2$ so that $\cd(S_{2n}(q))\nsubseteq \cd(O_{2n+1}(q)).$

Therefore $M\neq 1.$ Let $K\leq M$ be a minimal normal subgroup of $G.$ By Lemma \ref{Fitting}, we
deduce that $K$ is nonabelian so that $K\cong S^k,$ where $k\geq 1$ and $S$ is a nonabelian simple
group. By Lemma \ref{charext}, $S$ possesses a nontrivial irreducible character $\theta$ which is
extendible to $\Aut(S).$ By Lemma \ref{extend}, $\theta^k\in\Irr(K)$ extends to $\varphi\in\Irr(G),$ with
$\varphi(1)=\theta(1)^k>1.$ Now by Gallagher's Theorem, we
have that $\varphi\psi\in\Irr(G)$ for any $\psi\in\Irr(G/K).$ As $G/M$ is a simple group of Lie
type in characteristic $p,$ it has an irreducible character $\gamma$ of degree $|G/M|_p.$ Hence $\gamma(1)$ is a
nontrivial $p$-power degree of $G.$ By Lemma \ref{isolated}, we deduce that
$\gamma(1)=St_H(1)=|G|_p.$ As $\Irr(G/M)\subseteq\Irr(G/K),$ we deduce that
$\varphi\gamma\in\Irr(G)$ and hence $\varphi(1)\gamma(1)=\varphi(1)|H|_p\in\cd(G),$ 
contradicting Lemma \ref{isolated}. This contradiction proves the theorem.

\subsection*{Acknowledgment} The author would like to thank  the reviewer for the careful reading of the manuscript and pointing out references \cite{Nagl11, Nagl08}.  His or her comments and suggestions are very helpful. The author is grateful to Dr. Hung N. Nguyen and Prof. J. Moori for their help  in preparation of this work.

\begin{table}
 \begin{center}
  \caption{Upper bounds for the largest degree of simple classical groups}\label{Tab1}
  \begin{tabular}{c|c|c|c|c}
   \hline
 $S$ & $L_n^\epsilon(q)$&$S_{2n}(q)$&$O_{2n+1}(q)$&$O_{2n}^{\epsilon}(q)$\\\hline
 ${b}(S)\leq $& $q^{n(n-1)/2+1}$&$q^{n^2+1}$&$q^{n^2+1}$&$q^{n(n-1)+1}$\\\hline
  \end{tabular}
 \end{center}
\end{table}

\begin{table}
 \begin{center}
  \caption{Some unipotent characters of simple classical groups} \label{Tab2}
  \begin{tabular}{l|l|r}
   \hline
   $S=S(q)$  & Symbol &Degree\\ \hline
   $L_n(q),n\geq 4$ & $(1,n-1)$&$\dfrac{q^{n}-q}{q-1}$\\
   &$(2,n-2)$&$\dfrac{(q^n-1)(q^{n-1}-q^2)}{(q-1)(q^2-1)}$\\

   $U_n(q),n\geq 4$ & $(1,n-1)$&$\dfrac{q^{n}+(-1)^n q}{q+1}$\\
   &$(2,n-2)$&$\dfrac{(q^n-(-1)^n)(q^{n-1}+(-1)^n q^2)}{(q+1)(q^2-1)}$\\

   $S_{2n}(q),O_{2n+1}(q),n\geq 2$&$\binom{0\:1\:n}{\:-\:}$&$\dfrac{(q^n-1)(q^n-q)}{2(q+1)}$\\
   &$\binom{0\:1}{\:n\:}$&$\dfrac{(q^{2n}-1)(q^n+q)}{2(q^2-1)}$\\

   $O_{2n}^+(p^b),n\geq 4$&$\binom{n-1}{1}$&$\dfrac{(q^n-1)(q^{n-1}+q)}{q^2-1}$\\
   &$\binom{1\:n}{0\:1}$&$\dfrac{q^{2n}-q^2}{q^2-1}$\\

   $O_{2n}^-(p^b),n\geq 4$&$\binom{1\:n-1}{\:-\:}$&$\dfrac{(q^n+1)(q^{n-1}-q)}{q^2-1}$\\
   &$\binom{0\:1\:n}{\:1\:}$&$\dfrac{q^{2n}-q^2}{q^2-1}$\\

   \hline
\end{tabular}
\end{center}
\end{table}


\begin{thebibliography}{19}


\bibitem{Ber1} Y. Berkovich and E. Zhmud', \textit{Characters of finite groups, Part $1,2.$} Translations of Mathematical Monographs,
\textbf{172}, \textbf{181}. AMS, Providence, RI, 1997.

\bibitem{Bia} M. Bianchi et al., \textit{Character degree graphs that are complete graphs,}  Proc. Amer. Math. Soc.  \textbf{135}  (2007),  no. 3, 671--676.

\bibitem{Brauer} R. Brauer, \textit{Representations of finite groups,} Lectures on Modern Mathematics, Vol. I, 1963.

\bibitem{car85} R.W. Carter, \textit{Finite Groups of Lie Type. Conjugacy classes and complex characters,}
Pure Appl. Math., Wiley-Interscience/John Wiley and Sons, New York, 1985.

\bibitem{atlas} J.H. Conway, R.T. Curtis, S.P. Norton, R.A. Parker, R.A. Wilson, \textit{Atlas of Finite Groups,}
Oxford University Press, Eynsham, 1985.

\bibitem{Dade} E.C. Dade, \textit{Deux groupes finis distincts ayant la m\^{e}me alg\`{e}bre de groupe sur tout
corps,} (in French) Math. Z. \textbf{119} (1971), 345--348.

\bibitem{Guralnick} R. M. Guralnick, \textit{Subgroups of prime power index in a simple group,} J. Algebra \textbf{81} (1983), 304--311.

\bibitem{Hertweck} M. Hertweck, \textit{A counterexample to the isomorphism problem for integral group
rings,} Ann. of Math. (2) \textbf{154} (2001), no. 1, 115--138.

\bibitem{Hupp} B. Huppert, \textit{Some simple groups which are determined by the set of their character
degrees, I,} Illinois J. Math. \textbf{44} (2000), no. 4, 828--842.

\bibitem{Hupp06} B. Huppert, \textit{Some simple groups which are determined by the set of their character degrees.
II,} Rend. Sem. Mat. Univ. Padova \textbf{115} (2006), 1--13.

\bibitem{Isaacs} M. Isaacs, \textit{Character theory of finite groups,}  AMS Chelsea Publishing, Providence, RI, 2006.

\bibitem{Kimmerle02} W. Kimmerle, \textit{Group rings of finite simple groups. Around group rings,} Resenhas \textbf{5} (2002), no. 4, 261--278.

\bibitem{Kimmerle} W. Kimmerle et al.,  \textit{Composition factors from the group ring and Artin's theorem on orders of simple groups,} Proc. London Math. Soc (3) \textbf{60} (1990), 89--122.

\bibitem{Lub} F. L\"{u}beck, \textit{Smallest degrees of representations of exceptional groups of Lie type,} Comm. Algebra \textbf{29} (2001), no. 5, 2147--2169.

\bibitem{Malle} G. Malle and A. Zalesskii, \textit{Prime power degree representations of quasi-simple groups,} Arch. Math. (Basel)
\textbf{77} (2001), no. 6, 461--468.

\bibitem{Nagl11} M. Nagl, \textit{Charakterisierung der Symmetrischen Gruppen durch ihre komplexe
Gruppenalgebra,} Stuttgarter Mathematische Berichte 2011. 

 \texttt{http://www.mathematik.uni-stuttgart.de/preprints/downloads/2011/2011-007.pdf}

\bibitem{Nagl08} M. Nagl, \textit{\"{U}ber das Isomorphieproblem von Gruppenalgebren endlicher einfacher Gruppen,} Diplomarbeit, Universit\"{a}t Stuttgart, 2008.

\bibitem{Seitz} G. Seitz, \textit{Cross-characteristic embeddings of finite groups of Lie type, finite and
algebraic,} Proc. LMS \textbf{60}, (1990), 166--200.

\bibitem{TZ1} P.H. Tiep and A. Zalesskii, \textit{Some characterizations of the Weil representations of the symplectic and unitary groups,}
J. Algebra \textbf{192} (1997), no. 1, 130--165.


\bibitem{TZ} P.H. Tiep and A. Zalesskii, \textit{Minimal characters of the finite classical
groups,}  Comm. Algebra \textbf{24} (1996), no. 6, 2093--2167.

\bibitem{Hung2} H.P. Tong-Viet, \textit{Symmetric groups are determined by their character
degrees,} J. Algebra \textbf{334} (2011) no. 1, 275--284.

\bibitem{Hung1} H.P. Tong-Viet, \textit{Alternating and sporadic simple groups are determined by their character
degrees,} Algebr. Represent. Theor. (2011) DOI 10.1007/s10468-010-9247-1.

\bibitem{Hung3} H.P. Tong-Viet, \textit{Simple exceptional groups of Lie type are determined by their character
degrees,} Monatsh. Math. (2011) DOI 10.1007/s00605-011-0301-9.

\bibitem{Hung4} H.P. Tong-Viet, \textit{The simple Ree groups ${}^2F_4(q^2)$ are determined by the set of their character
degrees,}  J. Algebra \textbf{339} (2011) no. 1, 357--369.


\bibitem{HW1} H.P. Tong-Viet and T.P. Wakefield, \textit{On Huppert's conjecture for $G_2(q),q\geq 7,$} submitted.

\bibitem{Wake} T.P. Wakefield, \textit{Verifying Huppert's conjecture for $PSL_3(q)$ and
$PSU_3(q^2),$} Comm. Algebra \textbf{37} (2009), no. 8, 2887--2906.

\bibitem{Wake2} T.P. Wakefield, \textit{Verifying Huppert's conjecture for $PSp_4(q)$ when $q>7,$} Algebr. Represent Theor. (2011) DOI
10.1007/s10468-010-9246-2.

\bibitem{Willems} W. Willems, \textit{Blocks of defect zero in finite simple groups of Lie type,} J. Algebra
\textbf{113} (1988), no. 2, 511--522.

\bibitem{Zsi} K. Zsigmondy, \textit{Zur Theorie der Potenzreste,}  Monatsh. Math. Phys.  \textbf{3}  (1892),  no. 1, 265--284.

\end{thebibliography}
\end{document}